\newtheorem{theorem}{Theorem}[section]
\newtheorem{lemma}{Lemma}[section]
\newtheorem{corollary}{Corollary}[section]
\newtheorem{nb}{Remark}[section]
\newtheorem{example}{Example}[section]
\numberwithin{equation}{section}
\newcommand{\ds}{\displaystyle}
\newcommand{\Ti}{T_{\rm i}}
\newcommand{\qq}{\left(\frac52+\frac{\Ti}{T}\right)}
\newlength{\captionwidth}
\long\def\@makecaption#1#2{%
   \vskip 10\p@
   \setbox\@tempboxa\hbox{\small #1: #2}%
   \ifdim \wd\@tempboxa > \captionwidth 
       \hbox to\hsize{\hfil
       \parbox[t]{\captionwidth}{
        \small #1: #2\par}
       \hfil}
     \else
       \hbox to\hsize{\hfil\box\@tempboxa\hfil}%
   \fi}
\begin{document}
\title{The reflection of an ionized shock wave}
\author{Fumioki ASAKURA
\footnote{Center of Physics and Mathematics, Osaka Electro-Communication University, Neyagawa, Osaka, Japan,
 e-mail: {\tt\small asakura@osakac.ac.jp}}
\hspace{4ex}
Andrea CORLI
\footnote{Department of Mathematics and Computer Science, University of Ferrara, Via Machiavelli 30, 40121 Ferrara, Italy,
e-mail: {\tt\small andrea.corli@unife.it}}
}
\maketitle

\begin{abstract}
In a previous paper we studied the thermodynamic and kinetic theory for an ionized gas, in one space dimension; in this paper we provide an application of those results to the reflection of a shock wave in an electromagnetic shock tube. Under some reasonable limitations, which fully agree with experimental data, we prove that both the incident and the reflected shock waves satisfy the Lax entropy conditions; this result holds even outside genuinely nonlinear regions, which are present in the model.
We show that the temperature increases in a significant way behind the incident shock front but the degree of ionization does not undergo a similar growth. On the contrary, the degree of ionization increases substantially behind the reflected shock front. We explain these phenomena by means of the concavity of the Hugoniot loci. 
Therefore, our results not only fit perfectly but explain what was remarked in experiments.
\end{abstract}

\smallskip

\textit{\quad 2010~Mathematics Subject Classification:} 35L65, 35L67, 76N15.

\smallskip

\textit{\quad Key words and phrases:}
Systems of conservation laws, ionized gas, shock reflection.

\section{Introduction}
A shock tube is a long tube with constant cross-section having a high-pressure chamber and a low-pressure chamber separated by a diaphragm. In the high-pressure chamber, the gas (driver gas) is compressed and heated by moving forward a heavy piston. If the diaphragm bursts open, the driver gas induces a flow into the low-pressure chamber and produces a shock wave propagating in the same direction. The temperature and the pressure behind the shock wave can be measured by some instrument at the end of the tube if it is open. If the end is closed, the shock wave is reflected; also in this case, both the temperature and the pressure after the reflection can be measured as well.
\par
As long as mechanical devices are used for compressing the driver gas, the maximum shock speeds and temperatures are necessarily limited. In {\it electromagnetic shock tubes\/}, the driver gas is heated by means of an electric discharge and, sometimes, even accelerated by magnetic forces. The simplest electromagnetic shock tube, called the {\it T-tube\/}, is  shown schematically in Figure \ref{fig:T-tube} (without a reflector in most cases). The tube is filled with gas at low pressure. By switching on the circuit, the capacitor bank is discharged between the electrodes; the gas in the discharge region is rapidly heated to a high temperature, and hence let out by the large pressure into the glass tube at a high speed. A strong shock wave is thus produced and the gas behind the shock is ionized.  
\begin{figure}[hbt]
\centering
 \includegraphics[width =.65\linewidth]{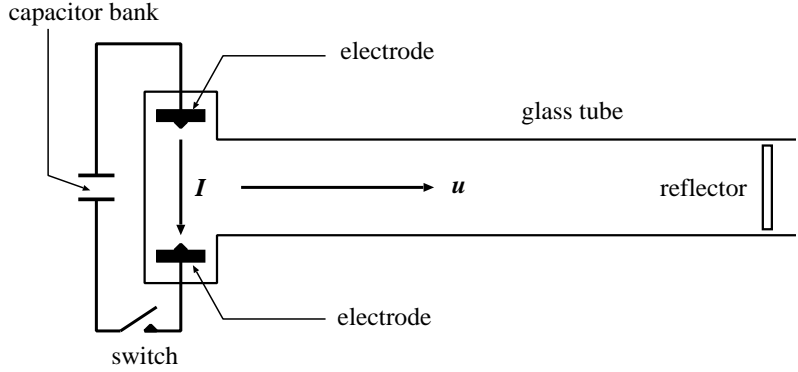}
 \caption{T-Tube with a reflector.}\label{fig:T-tube}
\end{figure}
\par
In 1960's, K. Fukuda and his colleagues made spectrometric measurements of the ionization generated by a shock wave in a plasma formed by helium or a mixture of helium-hydrogen gas. In particular, a reflector was set at the end of the T-tube and the measurements were done for the plasma generated by a shock reflection. For given initial data and speed of the shock wave, the temperature, particle density, and  degree of ionization of the plasma behind the shock wave were computed by the Rankine-Hugoniot conditions together with the Saha ionization formula, by assuming a condition of thermal equilibrium; this is called the {\it shock tube problem\/}.  As a basic reference of their analysis, they constructed several  ionization models in plasma and carried out computations in \cite{Fukuda-Okasaka-Fujimoto} \footnote{An English translation of \cite{Fukuda-Okasaka-Fujimoto} is available upon request to F. Asakura.}.
\par
The aim of this paper is to validate some results of \cite{Fukuda-Okasaka-Fujimoto} from a rigorous mathematical point of view by exploiting the theoretical analysis done in \cite{Asakura-Corli_RIMS, Asakura-Corli_ionized}. We briefly recall the physical model and the underlying assumptions; we refer to \cite{Asakura-Corli_ionized} for more details.

We  deal with a monatomic gas and assume that it can undergo only one ionization. We suppose that:

\begin{enumerate}[(1)] 

\item interaction potential energies are negligible with respect to kinetic energies;

\item effects of particle collisions can be neglected; 

\item local thermodynamic equilibrium is attained.

\end{enumerate}
We denote the concentration (number per unit volume) of atoms, ions and electrons in the gas by $n_{\rm a}$, $n_{\rm i}$ and $n_{\rm e}$, respectively. Let $m_{\rm p}$ denote the particle mass; by dropping the contribution of the electron mass we have $\rho = m_{\rm p}n_{\rm p}$, where $n_{\rm p} = n_{\rm a}+n_{\rm i}$ is the total number of atoms and ions per unit volume. The degree of ionization is defined by $\alpha = n_{\rm e}/(n_{\rm a} + n_{\rm i});$ we also denote the absolute temperature by $T$ and the Boltzmann constant by $k$. By  postulates (1), (2) and Dalton's law we deduce that the pressure $p$ assumes  the form 
\begin{equation}\label{eq:pressure-ion-mp}
p = (n_{\rm a}+n_{\rm i}+n_{\rm e})kT = (1+\alpha)n_{\rm p}kT = (1+\alpha)\frac{\rho}{m_{\rm p}}kT.
\end{equation}
Let $N_0$ denote Avogadro's number, $M = N_0 m_{\rm p}$ the molar mass  (denoted by $m$ in \cite{Asakura-Corli_ionized}), $R = k N_0$ the universal gas constant and $a^2=\frac{R}{M}$. Then, expression \eqref{eq:pressure-ion-mp} can we written as a modified equation of state of gas dynamics, namely, 
\begin{equation}\label{eq:EOS}
  p = a^2(1 + \alpha)\rho T.
\end{equation}
On the other hand, as a consequence of postulate (3), the equation of state is supplemented by Saha's equation
\begin{equation}\label{eq:saha}
\frac{n_{\rm i} n_{\rm e}}{n_{\rm a}} = \frac{2Z_{\rm i}}{Z_{\rm a}} \frac{(2\pi m_{\rm e} kT)^{\frac{3}{2}}}{h^3}\, e^{-\frac{T_{\rm i}}{T}}.
\end{equation}
As we shall see later, equation \eqref {eq:saha} can be put under the form given in \eqref{eq:deg-ionization-n}, by which $\alpha$ is determined by $p$ and $T.$
\par
We now lay down the plan of our paper. Section \ref{sec:ionized-gasdynamics} contains a brief account of the mathematical model together with the most important results proved in \cite{Asakura-Corli_ionized} that are used in the following. For our model, the shock tube problem consists in finding the thermodynamic state $(\alpha_-, T_-)$ behind the incident shock wave once the thermodynamic state $(\alpha_+, T_+)$ ahead of the shock wave and $u_{\pm}$, the particle velocities on both sides, are given. We proved in \cite{Asakura-Corli_ionized} that this problem has a unique solution. We also proved there that the forward and backward characteristic directions fail to be genuinely nonlinear in a small region; however, the thermodynamic part of the Hugoniot locus of a state always is the graph of a strictly increasing function of $\alpha$ in the $(\alpha, T)$-plane. 
\par
In an actual shock tube problem, the initial degree of ionization $\alpha_+$ is almost zero. However, Saha's equation \eqref{eq:deg-ionization-n} does not allow the value $\alpha_+ = 0$; this leads us to construct, in Section \ref{sec:incident shock}, an approximation of the thermodynamic part of the Rankine-Hugoniot condition. We show that, as long as this approximation is concerned, the Hugoniot curves stay in a genuinely nonlinear region, as is the case for both incident and reflected shock waves occurring in experiments. 
\par
In Section \ref{sec:temperature} we focus on the variation of the temperature across a shock wave. In particular, we identify two dimensionless parameters that are useful to estimate such a variation. This analysis is exploited in Section \ref{sec:condition Lax}, where we prove that the Lax shock conditions \cite{Smoller} are satisfied by both the incident and the reflected shock wave. In particular, we observe that in the state behind the {\em incident} shock front the temperature increases remarkably while the degree of ionization only a little; however, the degree of ionization increases much more in the state behind the {\em reflected} shock front, which shows the effective role played by the reflector set at the end of the T-tube. From a mathematical point of view, we explain this phenomenon by the concavity of the Hugoniot loci.

\section{System of ionized gasdynamics}\label{sec:ionized-gasdynamics}
\setcounter{equation}{0}
In this section we introduce the system of ionized gasdynamics and briefly recall the most important results of \cite{Asakura-Corli_ionized}. Under the notation already used in the Introduction, the system is
\begin{equation}\label{eq:system-n}
\left\{
\begin{array}{l}
\rho_t + (\rho u)_x = 0,
\\
\ds(\rho u)_t + (\rho u^2 + p)_x = 0,\rule{0ex}{2.75ex}
\\
\ds\left(\rho E\right)_t + \left(\rho u E + pu\right)_x = 0,\rule{0ex}{2.75ex}
\end{array}
\right.
\end{equation}
where $E = \frac{u^2}{2} + e$ is the (specific) total energy; $e$ is the (specific) internal energy and $u$ the particle velocity. By \eqref{eq:EOS} we deduce 
\begin{equation}\label{eq:e-n}
e  =  \frac32a^2(1+\alpha)T + a^2T_{\rm i}\alpha,
\qquad
H  =  \frac52a^2(1+\alpha)T + a^2T_{\rm i}\alpha, 
\end{equation}
where $H$ is the enthalpy. We also denote by $v$ and $S$ the specific volume and entropy, respectively, and by $\eta= S/a^2$ the dimensionless (specific) entropy; we use the notation $\theta = T(1 + \alpha)$.  In order to close system \eqref{eq:system-n}, in addition to the equation of state \eqref{eq:EOS} we need a further equation linking the ionization degree $\alpha$ to the pressure and the temperature. This is the famous Saha's equation, which can be written as 
\begin{equation}\label{eq:deg-ionization-n}
    \alpha = \left(1 + \kappa pT^{-\frac{5}{2}}e^{\frac{\Ti }{T}}\right)^{-\frac{1}{2}},
\end{equation}
where $T_{\rm i}>0$ is the ionization temperature and $\kappa>0$ a suitable constant. For the values of these and other constants we refer to Appendix \ref{sec:appendix_numerical_values}. We notice that $\alpha\in(0,1)$. Equation \eqref{eq:deg-ionization-n} may be inverted to deduce $p$ (and then $\rho$, $v$ or $\eta$) in terms of $\alpha$ and $T$. In particular one finds \cite[(3.9) and (3.14)]{Asakura-Corli_ionized}
\begin{equation}\label{eq:p-alphaT}
    p = p(\alpha, T) = \frac1\kappa\,\frac{1 - \alpha^2}{\alpha^2}T^{\frac{5}{2}}e^{-\frac{\Ti }{T}},
    \qquad
    \eta(\alpha,T) =  2\log \frac{\alpha}{1 - \alpha} +  (1 + \alpha)\qq  + C.
\end{equation}
\paragraph{Characteristic vector fields}
As in \cite{Asakura-Corli_ionized}, in the following we denote partial derivatives with subscripts; for instance, $\eta_\alpha$ and $\eta_T$ are the partial derivatives of $\eta=\eta(\alpha,T)$ with respect to $\alpha$ and $T$, respectively, and so on. System \eqref{eq:system-n} is strictly hyperbolic with eigenvalues $\lambda_1 = u - c,\  \lambda_0 = u,\  \lambda_2 =  u + c,$ and corresponding characteristic vectors $R_1 = (-1, c/\rho, 0)^T$, $R_0 = (0,  0,  1 )^T$, $R_2 = (1, c/\rho, 0)^T$. Here we denoted by $c=\lambda/\rho$ the sound speed, for $\lambda^2=-\eta_T/(v_p\eta_T-v_T\eta_p)$, see \cite[(4.9)]{Asakura-Corli_ionized}; we also have 
\begin{align}
  c &= \sqrt{p_\rho(\rho,S)} = 
a\sqrt{\frac{5\theta}{3}}\sqrt{\frac{1 + \alpha(1 - \alpha)\left(\frac{5}{4} + \frac{\Ti}{T} + \frac{\Ti^2}{5T^2}\right)}{1 + \alpha(1 - \alpha)\left(\frac{5}{4} + \frac{\Ti}{T} + \frac{\Ti^2}{3T^2}\right)}}
\label{eq:euler characteristic speed}
\end{align}
and
\begin{equation}\label{e:pS}
	p_\eta(\rho,\eta)  = \frac{2p\left[1 + \frac{1}{2}\alpha(1 - \alpha)\left(\frac{5}{2}  + \frac{T_i}{T}\right)\right]}{3(1 + \alpha)\left[ 1 + \alpha(1 - \alpha)\left(\frac{5}{4}+   \frac{T_i}{T} +  \frac{T_i^2}{3T^2} \right)\right]} > 0.
\end{equation}
By direct computation we find
\begin{equation}\label{eq:p_vv}
  R_1\cdot \nabla \lambda_1 = R_2\cdot \nabla \lambda_2 = \frac{\partial c}{\partial \rho} + \frac{c}{\rho} = \frac{p_{vv}}{2\rho^3\sqrt{-p_v}}.
\end{equation}
%
\par 
\paragraph{Genuinely nonlinear region}
While the $0$-characteristic direction is linearly degenerate, a notable feature of system \eqref{eq:system-n} is that the $1$ and $2$-characteristic directions miss to be genuinely nonlinear, see Figure \ref{f:GNL} on the left. However, we have the following result, see \cite[Th. 4.1]{Asakura-Corli_ionized}.
\begin{theorem}\label{thm:genuine nonlinearity}
If either $\alpha \leq 60 \left(\frac{T}{\Ti}\right)^{\! 3}$ or $\frac{\Ti}{T} \leq 54.5375,$
then the $1$ and $2$-characteristic directions are genuinely nonlinear.
\end{theorem}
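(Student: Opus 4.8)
The plan is to reduce the genuine nonlinearity of the $1$- and $2$-characteristic fields to a pointwise inequality on the pressure and then to verify that inequality on the prescribed region. By \eqref{eq:p_vv} we have $R_1\cdot\nabla\lambda_1=R_2\cdot\nabla\lambda_2=p_{vv}/(2\rho^{3}\sqrt{-p_v})$, where $p=p(v,S)$ and the derivatives are taken at constant entropy; since $-p_v=c^{2}/v^{2}>0$ by \eqref{eq:euler characteristic speed}, the two fields are genuinely nonlinear at a state exactly when $p_{vv}\neq 0$ there. In the non-ionized limit $\alpha\to 0$ the gas is polytropic with $\gamma=\tfrac53$, so $p_{vv}>0$; hence the natural target is to prove $p_{vv}>0$ on the set where $\alpha\leq 60(T/\Ti)^{3}$ or $\Ti/T\leq 54.5375$.

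The next step is to express $p_{vv}$ explicitly in the thermodynamic variables $(\alpha,T)$. From \eqref{eq:p-alphaT} one has $p=p(\alpha,T)$ and, via \eqref{eq:EOS}, $v=v(\alpha,T)$; an isentrope is the level curve $\eta(\alpha,T)=\mathrm{const}$ of the function in \eqref{eq:p-alphaT}, which I would parametrise (say by $T$). Differentiating $p$ twice along this curve by the chain rule — using the isentropic relation $\eta_\alpha\,\d\alpha+\eta_T\,\d T=0$ to eliminate $\d\alpha/\d T$ — and simplifying with the identities already recorded in \eqref{eq:euler characteristic speed} and \eqref{e:pS}, one obtains a representation of the form
\[
  p_{vv}=\frac{p}{v^{2}}\cdot\frac{\mathcal N(\alpha,\tau)}{\mathcal D(\alpha,\tau)},\qquad \tau=\frac{\Ti}{T},
\]
where $\mathcal D(\alpha,\tau)>0$ collects powers of the positive factors already appearing in \eqref{eq:euler characteristic speed}--\eqref{e:pS}, and $\mathcal N(\alpha,\tau)$ is a polynomial in $\tau$ whose coefficients are polynomials in $\alpha$, with $\mathcal N(0,\tau)>0$ (the classical polytropic contribution) and a correction proportional to $\alpha$. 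Everything thus reduces to showing $\mathcal N(\alpha,\tau)>0$ on $(0,1)\times(0,\infty)$ intersected with the hypothesis set; this is exactly the computation underlying \cite{Asakura-Corli_ionized}.

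For the sign analysis I would split $\mathcal N=\mathcal N_{+}-\mathcal N_{-}$ into a manifestly positive part and a remainder, and bound $\mathcal N_{-}$ from above. The terms of $\mathcal N_{-}$ that are dangerous for large $\tau$ carry a factor $\alpha$ together with powers of $\tau$ up to the third, so, using $\alpha\in(0,1)$ to control the terms of lower order in $\tau$, one gets an estimate of the form $\mathcal N\geq \mathcal N_{+}-c_{0}\,\alpha\tau^{3}-(\text{lower order})$, from which $\mathcal N>0$ follows as soon as $\alpha\tau^{3}=\alpha(\Ti/T)^{3}$ lies below an explicit threshold; calibrating the constants so that this threshold is $60$ yields the first sufficient condition. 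For the second condition I would instead fix $\tau$ and minimise $\alpha\mapsto\mathcal N(\alpha,\tau)$ over $[0,1]$: solving $\partial_\alpha\mathcal N(\alpha,\tau)=0$ gives the worst value $\alpha^{*}(\tau)$, and substituting back produces a one-variable function $F(\tau)=\mathcal N(\alpha^{*}(\tau),\tau)$ which is positive for small $\tau$ and whose first zero is the numerical constant $54.5375$; hence $\mathcal N>0$ for every $\alpha\in(0,1)$ whenever $\tau\leq 54.5375$.

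The main obstacle is this last step: proving the two-variable positivity of $\mathcal N$ rigorously rather than merely numerically, and in particular pinning down the threshold $54.5375$, which is the root of a high-degree polynomial (or transcendental) equation obtained by eliminating $\alpha$. In practice this forces part of the argument to be a careful estimate — reducing to a univariate inequality on the bounded interval $\tau\in(0,54.5375]$ and controlling it by explicit bounds or interval arithmetic — combined with the elementary $\alpha\tau^{3}\leq 60$ bound, which is what disposes of the unbounded range of $\tau$ (there the parameter $\alpha$ must be correspondingly small). Assembling the two cases covers the hypothesis set and gives $p_{vv}>0$, hence genuine nonlinearity.
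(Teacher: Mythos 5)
Your reduction is the right one and is the same first step the authors take: by \eqref{eq:p_vv} and Remark \ref{nb:invariance_1}, genuine nonlinearity of the $1$- and $2$-fields follows from $p_{vv}>0$ along isentropes, and the natural way to check this is to rewrite $p_{vv}$ in the thermodynamic variables $(\alpha,\tau)$ with $\tau=\Ti/T$, using \eqref{eq:p-alphaT} and the isentropic relation $\eta_\alpha\,\d\alpha+\eta_T\,\d T=0$. Note, however, that this paper does not actually prove the theorem; it is quoted from \cite[Th.~4.1]{Asakura-Corli_ionized}, so the entire mathematical content of the statement is the explicit formula for the numerator $\mathcal N(\alpha,\tau)$ and the quantitative verification that each of the two hypotheses forces $\mathcal N>0$.

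That is precisely where your argument has a genuine gap. You never derive $\mathcal N(\alpha,\tau)$: its degree in $\tau$, the $\alpha$-dependence of its coefficients, and in particular whether the dangerous negative terms are really of the form $c_0\,\alpha\tau^3$ (rather than, say, $\alpha\tau^2$ or $\alpha^2\tau^3$) are all asserted, not established. Consequently the two constants in the statement are not proved but reverse-engineered: you say you would ``calibrate the constants so that this threshold is $60$'' and that $54.5375$ is ``the first zero'' of a function $F(\tau)$ you have not written down. As it stands the same template would ``prove'' the theorem with any other pair of constants, so the step you yourself flag as the main obstacle --- the rigorous two-variable positivity of $\mathcal N$ on the hypothesis set, including the unbounded range of $\tau$ under the constraint $\alpha\tau^3\le 60$ --- is exactly the part that is missing. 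To close the gap you would need to (i) compute $p_{vv}$ explicitly (equivalently, compute the second derivative of $p$ along $\eta=\mathrm{const}$ using the expressions \eqref{eq:p-alphaT}, in the spirit of the computation displayed in the proof of Theorem \ref{thm:concave T} for $\mathcal T''$), (ii) exhibit the decomposition $\mathcal N=\mathcal N_+-\mathcal N_-$ with explicit coefficients, and (iii) carry out the two one-variable optimizations that produce $60$ and $54.5375$.
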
 
\begin{figure}[hbt]
\begin{tabular}{cc}
\includegraphics[width =.45\linewidth]{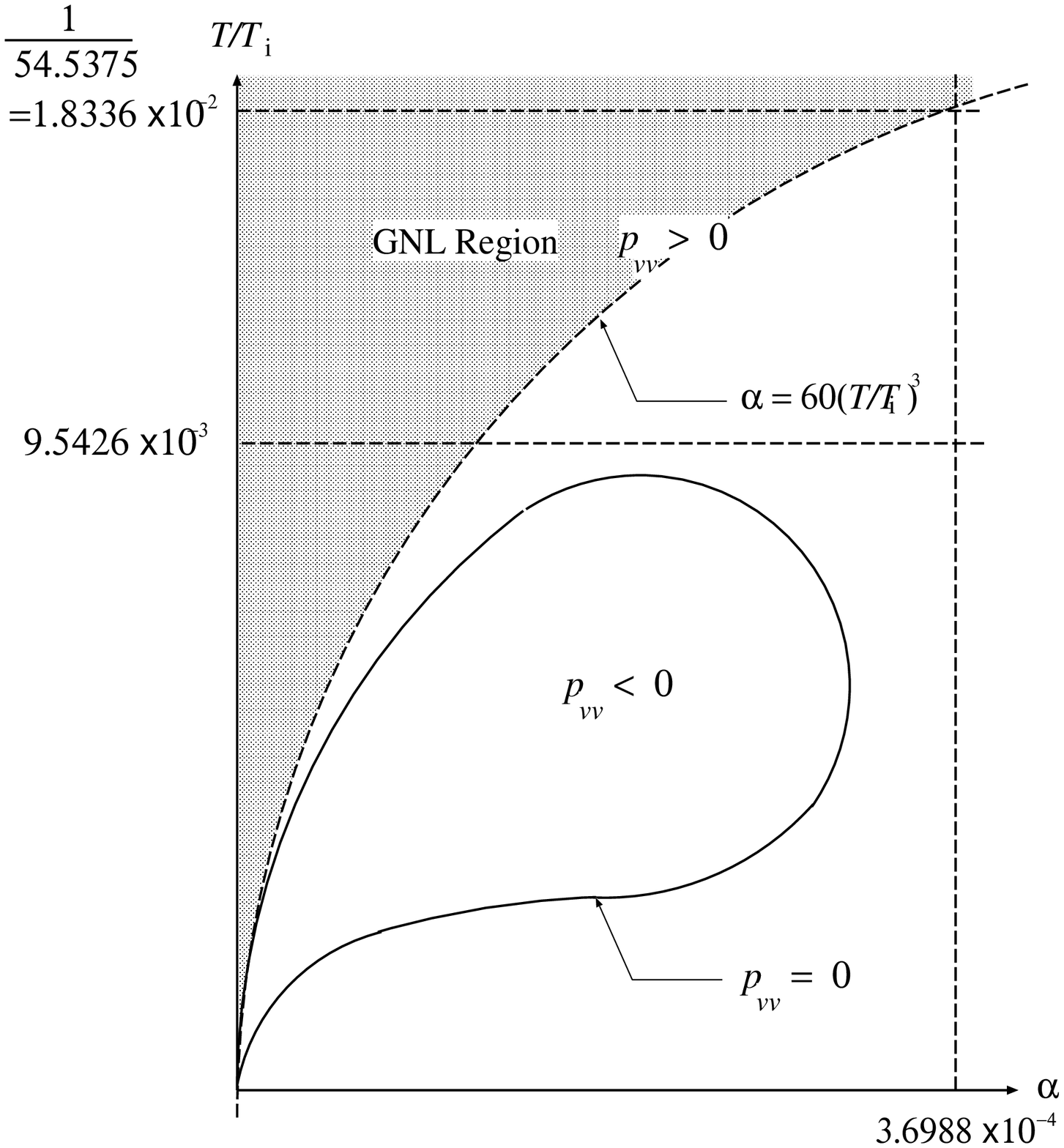}
&
\includegraphics[width =.55\linewidth]{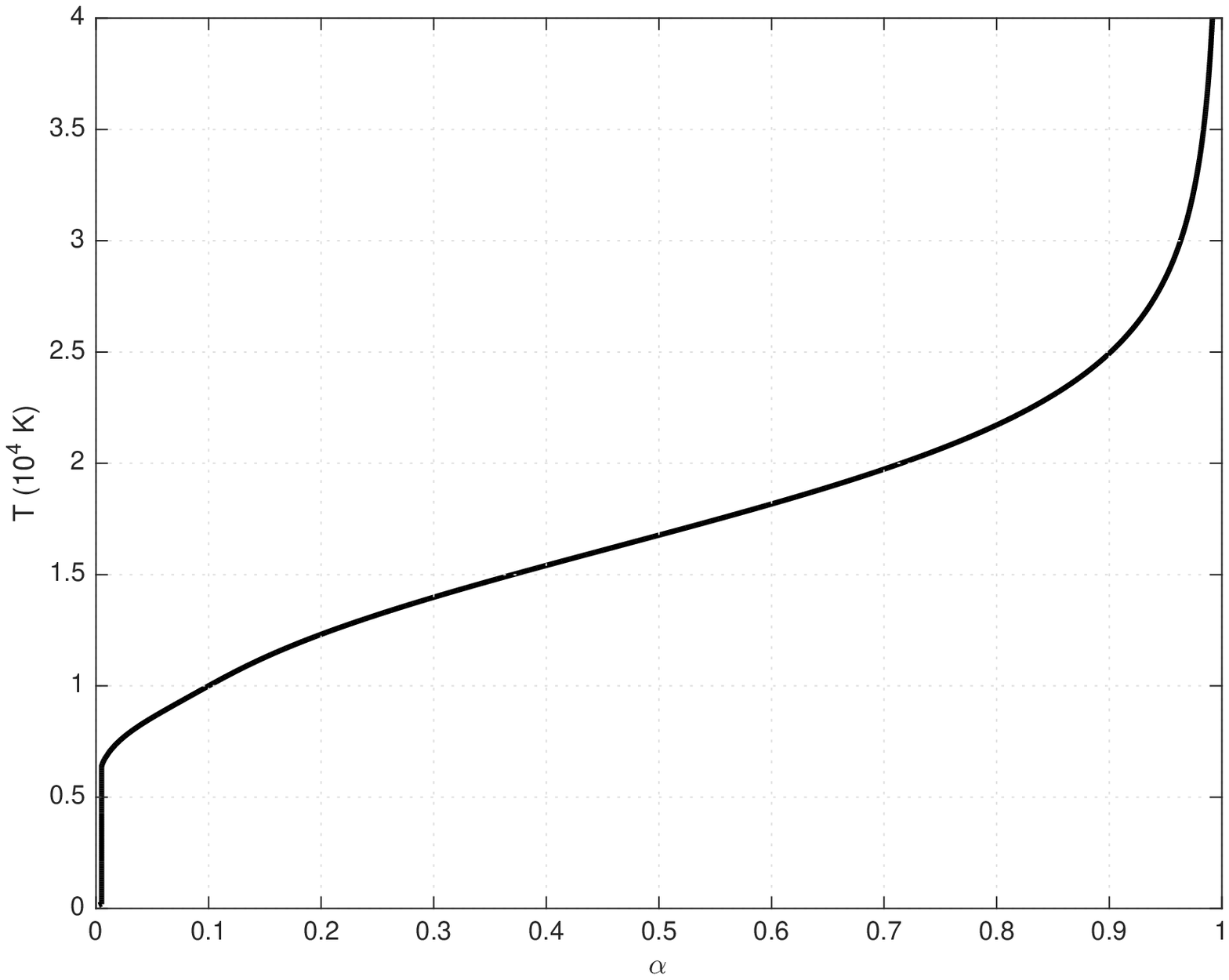}
\end{tabular}
\caption{Left: the $1$ and $2$-characteristic directions are not genuinely nonlinear on the curve in the white region (inflection locus); the region characterized by Theorem \ref{thm:genuine nonlinearity} is depicted in gray. Right: the thermodynamic part of the Hugoniot locus, see Theorem \ref{thm:Hugoniot curve alpha T}; data $\alpha_0$ and $T_0$ are as in case (1) of Appendix \ref{sec:appendix_numerical_values}.
}
\label{f:GNL}
\end{figure}
We call {\em genuinely nonlinear} a region where both $1$ and $2$-characteristic directions are genuinely nonlinear. We notice that the value $\alpha= 3.6988\times 10^{-4}$, see Figure \ref{f:GNL} on the left, is much larger than that occurring in the low-pressure chamber in the experiments, see Appendix \ref{sec:appendix_numerical_values}. We also notice that the condition $\frac{T}{T_{\rm i}}\ge 1/54.5375\sim 1.8336\times 10^{-2}$ is almost two times larger than the more precise value 
$\frac{T}{T_{\rm i}}\ge 9.5057\times 10^{-3}$ deduced numerically as an upper bound for the inflection locus.

\paragraph{Rankine-Hugoniot conditions}
The Rankine-Hugoniot conditions for a discontinuity of constant speed $U$ are
\begin{equation}\label{eq:RH-n}
\left\{
\begin{array}{l}
U[\rho] = [\rho u],
\\
U\ds[\rho u] = [\rho u^2 + p],
\\
U\ds[\rho E] = [\rho u E + pu].
\end{array}
\right.
\end{equation}
Here we denoted $[\rho]=\rho_+-\rho_-$, where $\rho_+$ and $\rho_-$ are the traces of $\rho$ at $x=Ut$ from the {\em right} and from the {\em left} side, respectively; the same notation is used for the other variables.
%
%
%
%

\noindent By computing $U$ from $\eqref{eq:RH-n}_1$, conditions $\eqref{eq:RH-n}_{2,3}$ can be written as
\begin{equation}\label{eq:Rankine-Hugoniot-n}
\left\{\begin{array}{rcl}
        (u_+ - u_-)^2  + (p_+ - p_-)(v_+ - v_-) & = & 0,\\[1mm]
        e_+ -  e_- + \frac{1}{2}(p_+ + p_-)(v_+ - v_-) & = & 0.
                \end{array}\right.
\end{equation}
For a fixed state $(\alpha_-,u_-,T_-)$, the states $(\alpha_+,u_+,T_+)$ satisfying system \eqref{eq:Rankine-Hugoniot-n} are said to form the {\em Hugoniot locus} of $(\alpha_-,u_-,T_-)$. Equation $\eqref{eq:Rankine-Hugoniot-n}_1$ and $\eqref{eq:Rankine-Hugoniot-n}_2$ are called the {\em kinetic part} and the {\em thermodynamic part} of the Hugoniot locus, respectively. In particular, equation $\eqref{eq:Rankine-Hugoniot-n}_2$ can be written in terms of $T$ and $\alpha$ as
\begin{align}
 &T_-\left[\left(4 + \frac{p_+}{p_-}\right)(1 + \alpha_-) +  \frac{2\Ti}{T_{-}}\alpha_-\right] 
=  T_+\left[\left(4 + \frac{p_-}{p_+}\right)(1 + \alpha_+) +  \frac{2\Ti}{T_{+}}\alpha_+\right],\label{eq:T_-/T_+prop-n}\\
&\frac{p_+}{p_-} 
= \left(\frac{1 - \alpha_+^2}{1 - \alpha_-^2}\right)\left(\frac{\alpha_-}{\alpha_+}\right)^2 \left(\frac{T_{+}}{T_{-}}\right)^{\frac{5}{2}}
e^{-\frac{\Ti}{T_{+}} + \frac{\Ti}{T_{-}}}. \label{eq:p_+/p_-}
\end{align}
Then, equations \eqref{eq:Rankine-Hugoniot-n} may be also written as 
\begin{align}
\label{eq:kinetic}
&\frac{p_-}{p_+} + \frac{v_-}{v_+} - 1 -  \frac{\theta_-}{\theta_+}   -  \frac{ (u_- - u_+)^2}{a^2\theta_+} = 0,
\\
\label{e:T-T+-n}
&    \left(4 + \frac{p_+}{p_-}\right)\theta_- + 2\Ti\alpha_- = \left(4 + \frac{p_-}{p_+}\right)\theta_+ + 2\Ti\alpha_+.
\end{align}
For future reference, when we fix states $(\alpha_+,u_+,T_+)$ and $u_-$, we denote by 
\begin{equation}\label{e:GH}
G_+(\alpha_-,T_-)=0\quad  \hbox{ and } \quad H_+(\alpha_-,T_-)=0 
\end{equation}
the loci in \eqref{eq:kinetic} and \eqref{e:T-T+-n}, respectively. The following result is a consequence of  \cite[Prop. 5.3, Th. 5.1]{Asakura-Corli_ionized}.
\begin{theorem}\label{thm:Hugoniot curve alpha T}
Fix $(\alpha_0,T_0)$. In the $(\alpha, T)$-plane, the thermodynamic part of the Hugoniot locus of $(\alpha_0,T_0)$ is the graph of a strictly increasing function $T=T(\alpha)$ for $\alpha\in (0,1)$, see Figure \ref{f:GNL} on the right, and $\lim_{\alpha\to0}T(\alpha)=0$, $\lim_{\alpha\to1}T(\alpha)=\infty$.
Moreover, fix  $u$ and $u_0$, with $u \ne u_0$. Then there exists a unique point $(\alpha, T)$ with $\alpha \in (\alpha_0,1)$, such that $(\alpha, u, T)$ belongs to the Hugoniot locus of $(\alpha_0,u_0,T_0)$.
 \end{theorem}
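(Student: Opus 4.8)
\medskip
\noindent\emph{Proof proposal.} I would split the statement into its two assertions and reduce each to a one‑variable problem.

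\emph{Part 1 (the thermodynamic Hugoniot locus is a graph over $\alpha$).} I would first rewrite the thermodynamic part of the Hugoniot locus of $(\alpha_0,T_0)$ as the zero set of an explicit function. Plugging $p(\alpha,T)=\frac1\kappa\frac{1-\alpha^2}{\alpha^2}T^{5/2}e^{-\Ti/T}$ from \eqref{eq:p-alphaT} into \eqref{e:T-T+-n}, and writing $p_0=p(\alpha_0,T_0)$, $\theta_0=T_0(1+\alpha_0)$, the locus is exactly $\{F=0\}$ on $(0,1)\times(0,\infty)$, where
\[
F(\alpha,T):=\Bigl(4+\tfrac{p_0}{p(\alpha,T)}\Bigr)T(1+\alpha)+2\Ti\alpha-\Bigl(4+\tfrac{p(\alpha,T)}{p_0}\Bigr)\theta_0-2\Ti\alpha_0
\]
(the two natural assignments of the labels $\pm$ in \eqref{e:T-T+-n} give $F$ and $-F$, hence the same set), and one checks $F(\alpha_0,T_0)=0$. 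The plan is then: (i) for fixed $\alpha$, show $F(\alpha,\cdot)$ has a unique zero $T(\alpha)$; (ii) show $T(\cdot)$ is increasing; (iii) compute the boundary limits. For (i): since $T^{5/2}e^{-\Ti/T}$ runs monotonically from $0$ to $\infty$, so does $p(\alpha,\cdot)$, which forces $F(\alpha,T)\to+\infty$ as $T\to0^+$ (the term $\frac{p_0}{p}T(1+\alpha)\sim T^{-3/2}e^{\Ti/T}$ blows up) and $F(\alpha,T)\to-\infty$ as $T\to\infty$ (the term $-\frac{p}{p_0}\theta_0$, of order $T^{5/2}$, beats the linear term $4T(1+\alpha)$); so there is at least one zero. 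A direct differentiation gives $F_\alpha>0$ on all of $(0,1)\times(0,\infty)$ (each of the four additive pieces of $F$ is increasing in $\alpha$, using $\frac{d}{d\alpha}\frac{\alpha^2}{1-\alpha}>0$ and $\partial_\alpha p<0$), while $F_T$ has no fixed sign; the crucial input, from \cite[Prop.~5.3]{Asakura-Corli_ionized}, is that $F_T<0$ at every zero of $F$. Granting that, $F(\alpha,\cdot)$ crosses zero exactly once, so $T(\alpha)$ is well defined and $T(\alpha_0)=T_0$; the implicit function theorem then gives $T\in C^1$ and $T'(\alpha)=-F_\alpha/F_T>0$, which is (ii). For (iii): for fixed $T^*>0$ one has $F(\alpha,T^*)\to-\infty$ as $\alpha\to0^+$ (since $p(\alpha,T^*)\to\infty$, so $-\frac{p}{p_0}\theta_0\to-\infty$) and $F(\alpha,T^*)\to+\infty$ as $\alpha\to1^-$ (since $p(\alpha,T^*)\to0$); because $F(\alpha,\cdot)$ is positive before its unique zero and negative after, this yields $T(\alpha)<T^*$ for $\alpha$ near $0$ and $T(\alpha)>T^*$ for $\alpha$ near $1$, hence $\lim_{\alpha\to0}T(\alpha)=0$ and $\lim_{\alpha\to1}T(\alpha)=\infty$.

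\emph{Part 2 (prescribing $u$).} A triple $(\alpha,u,T)$ lies on the Hugoniot locus of $(\alpha_0,u_0,T_0)$ iff it satisfies the thermodynamic part — which by Part 1 forces $T=T(\alpha)$ — together with the kinetic part $\eqref{eq:Rankine-Hugoniot-n}_1$, which along that curve reads
\[
(u-u_0)^2=g(\alpha):=\bigl(p(\alpha,T(\alpha))-p_0\bigr)\bigl(v_0-v(\alpha,T(\alpha))\bigr),\qquad v=\tfrac{a^2T(1+\alpha)}{p},\quad v_0=\tfrac{a^2\theta_0}{p_0}.
\]
So I would show $g$ maps $(\alpha_0,1)$ bijectively onto $(0,\infty)$. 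One has $g(\alpha_0)=0$; and as $\alpha\to1^-$, feeding $T(\alpha)\to\infty$ into \eqref{e:T-T+-n} gives $p(\alpha,T(\alpha))\to\infty$ and $p/(T(1+\alpha))\to 4p_0/\theta_0$, hence $v\to v_0/4$ (the classical monatomic strong‑shock compression ratio) and $g(\alpha)\to\infty$. Strict monotonicity of $g$ on $(\alpha_0,1)$ follows once one knows that along the curve, for $\alpha>\alpha_0$, the pressure $p$ is strictly increasing and the specific volume $v$ strictly decreasing — equivalently, the squared mass flux $j^2=(p-p_0)/(v_0-v)$ and $v_0-v$ are strictly increasing, so $g=j^2(v_0-v)^2$ is — which is the content of \cite[Th.~5.1]{Asakura-Corli_ionized}. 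Then $g(\alpha)=(u-u_0)^2>0$ has a unique root $\alpha\in(\alpha_0,1)$, and the point is $(\alpha,T(\alpha))$.

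\emph{Main difficulty.} The soft part — existence of at least one $T(\alpha)$, the two boundary limits, and $F_\alpha>0$ — is immediate from the explicit form of $p(\alpha,T)$. The real obstacle is the sign of $F_T$ along $\{F=0\}$ in Part 1 and, equivalently, the monotonicity of $p$ and $v$ along the shock adiabat in Part 2: here $p(\alpha,T)$ \emph{decreases} in $\alpha$, so the growth of $T(\alpha)$ must strictly dominate it, and this cannot be read off from general hyperbolic theory — all the more since, by Theorem~\ref{thm:genuine nonlinearity}, the system is not genuinely nonlinear everywhere — but has to be obtained by exploiting the precise structure of \eqref{eq:p-alphaT} together with the Hugoniot constraint, as carried out in \cite{Asakura-Corli_ionized}.
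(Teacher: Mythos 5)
The paper offers no proof of this theorem at all --- it is imported as ``a consequence of [Prop.~5.3, Th.~5.1]'' of \cite{Asakura-Corli_ionized} --- and your reconstruction defers the two genuinely hard inputs (the sign of $F_T$ along the zero set of $F$, and the monotonicity of $p$ and $v$ along the shock adiabat) to exactly those same external results, so it is essentially the same approach. The connecting tissue you supply --- the computation $F_\alpha>0$, the boundary limits read off from the sign of $F(\alpha,T^*)$ for fixed $T^*$, and the reduction of the kinetic part to the strict monotonicity of $g$ from $0$ onto $(0,\infty)$ over $(\alpha_0,1)$ --- is correct and is in fact more detailed than what the paper itself records.
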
 


The {\it relative (shock)  velocity\/} is defined by $V = U - u$, which is the shock velocity relative to the particle velocity; note that $u_+ - u_- = -(V_+ - V_-).$ By using $V$, conditions \eqref{eq:RH-n} can be written as
\begin{equation}\label{e:RH-enthalpy}
\left\{
\begin{array}{l}  
\rho_+ V_+ = \rho_-V_-, \\
p_+ + \rho_+ V_+^2= p_- + \rho_- V_-^2,\\
H_+ + \frac{1}{2}V_+^2 = H_- + \frac{1}{2}V_-^2. 
\end{array}
\right.
\end{equation} 
Note that the {\it mass flux\/} $m = \rho_{\pm} V_{\pm}$ is constant. Since $\rho_{\pm}V_{\pm}^2 = m^2/\rho_{\pm} = m^2 v_{\pm},$ by $\eqref{e:RH-enthalpy}_2$ we have $m^2 = -(p_{+} - p_{-})/(v_{+} - v_{-})$, showing that $m$ is the {\it Lagrangian shock velocity\/}. Moreover
\(U = u_{\pm} + m/\rho_{\pm}.\)
Then, equations \eqref{e:RH-enthalpy} become  
\begin{equation}\label{eq:Rankine-Hugoniot V}
\left\{\begin{array}{l}
               (V_+ - V_-)^2 =- (p_+ - p_-)(v_+ - v_-),
               \\
               H_+ -  H_- = \frac{1}{2}(p_+ - p_-)(v_+ + v_-).
        \end{array}\right.
\end{equation}
\par
\paragraph{Lax conditions}
We call backward (forward) shock wave a shock wave corresponding to the 1- (resp., 2-) characteristic direction. The Lax conditions for backward and forward shock waves are, respectively,
\begin{align}
\label{eq:shock_conditions1}
\hbox{backward:\quad } &u_+ - c_+ < U  < u_+, \quad U < u_- - c_-,
\\
\label{eq:shock_conditions}
\hbox{forward:\quad } &u_- < U < u_- + c_-, \quad u_+ + c_+ < U.
\end{align}
Note that conditions \eqref{eq:shock_conditions} can be written as
\begin{equation}\label{eq:shock_conditions_2}
0 < V_-  <  c_-, \quad  c_+ < V_+.
\end{equation} 

\begin{nb}\label{nb:positive backward}
Conditions \eqref{eq:shock_conditions} and \eqref{eq:shock_conditions1} are called {\em evolutionary conditions} in Landau-Lifshitz \cite[\S 88]{Landau-Lifshitz}. Notice that, in Eulerian coordinates, a backward shock wave may have a positive speed $(U > 0).$ In this case, by \eqref{eq:shock_conditions1} we have $0 < U < u_+.$  Analogously, if a forward shock wave has a negative speed, then $u_- < U < 0$ by \eqref{eq:shock_conditions}. In Section \ref{sec:condition Lax} we assume $u_+ = 0$ for the incident shock wave and $u_- > 0$ for the reflected wave, see Figure \ref{fig:wall2}. Hence, these backward (forward) shock waves with positive (resp., negative) speed are ruled out in our analysis. 
\end{nb}

The following theorem is called the Bethe-Weyl theorem; we refer to  Menikoff-Plohr \cite[Th. 4.1]{Menikoff-Plohr}.
\begin{theorem}\label{thm:Bethe-Weyl}
The thermodynamic part of the Hugoniot locus of the state $(v_0,S_0)$ in $(p,v)$-plane intersects each isentrope at least once. If $p_{vv} > 0$ along an isentrope, then the thermodynamic part of the Hugoniot locus intersects it exactly once; in this case, we have $v < v_0$ and $|U - u| < c$ if $S > S_0,$ while the opposite inequalities hold if $S < S_0.$  
\end{theorem}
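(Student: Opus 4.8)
The plan is to reduce everything to the behaviour of a single function of the specific volume along a fixed isentrope. Writing the internal energy as $e(v,S)$ and recalling $e_v(v,S)=-p$, $e_S(v,S)=T>0$, and $p_S=(\partial p/\partial S)_v>0$ (by \eqref{e:pS}), I would introduce the Hugoniot function
\[
\mathcal{H}(v,S)=e(v,S)-e(v_0,S_0)+\tfrac12\bigl(p(v,S)+p_0\bigr)(v-v_0),
\]
so that, by $\eqref{eq:Rankine-Hugoniot V}_2$, the thermodynamic part of the Hugoniot locus of $(v_0,S_0)$ is the zero set $\{\mathcal{H}=0\}$, which contains $O=(v_0,S_0)$. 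Fixing an isentrope $\Sigma$ of entropy $S$ and putting $h(v):=\mathcal{H}(v,S)$, a short computation gives $h'(v)=\tfrac12\bigl(p_0-p(v,S)\bigr)+\tfrac12 p_v(v,S)(v-v_0)$ and $h''(v)=\tfrac12 p_{vv}(v,S)(v-v_0)$; moreover $h(v_0)=e(v_0,S)-e(v_0,S_0)$ and $h'(v_0)=\tfrac12\bigl(p_0-p(v_0,S)\bigr)$ carry the sign of $S-S_0$ and of $-(S-S_0)$, respectively.

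For existence I only need $h$ to change sign. From \eqref{eq:p-alphaT} one reads off that along $\Sigma$, as $v\to 0^+$ one has $T\to\infty$ with $p$ of order $T^{5/2}$ while $e$ is only of order $T$, so $\tfrac12(p+p_0)(v-v_0)\to-\infty$ dominates and $h(v)\to-\infty$; and as $v\to\infty$ one has $p,e\to 0$, so $h(v)\sim -e(v_0,S_0)+\tfrac12 p_0 v\to+\infty$. The intermediate value theorem then gives a zero of $h$, i.e. an intersection with $\Sigma$. (For a general equation of state these two asymptotics are precisely the structural "weak conditions" assumed in \cite{Menikoff-Plohr}; no convexity is used here.)

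For uniqueness and the sign of $v-v_0$ I would use $p_{vv}>0$ on $\Sigma$: then $h$ is strictly convex on $\{v>v_0\}$ and strictly concave on $\{v<v_0\}$, and the same remark applied to $\Sigma_0$ shows that $h_0(v):=\mathcal{H}(v,S_0)$ has $h_0(v_0)=h_0'(v_0)=0$, is convex for $v>v_0$ and concave for $v<v_0$, hence $h_0>0$ on $\{v>v_0\}$ and $h_0<0$ on $\{v<v_0\}$. If $S>S_0$: since $\mathcal{H}_S(v,S)=T+\tfrac12 p_S(v-v_0)>0$ for $v\ge v_0$, one gets $h>h_0>0$ on $\{v>v_0\}$, so no zero there, while on $\{v<v_0\}$ the concave function $h$ runs from $h(v_0)>0$ down to $-\infty$ and so has exactly one zero, necessarily in $(0,v_0)$. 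If $S<S_0$: on $\{v<v_0\}$ concavity makes $h'$ non-increasing, and $h'(v_0)>0$ then forces $h'>0$ there, so $h<h(v_0)<0$ and there is no zero; on $\{v>v_0\}$, $h$ is convex with $h(v_0)<0$ and $h\to+\infty$, so it has exactly one zero, in $(v_0,\infty)$. This yields uniqueness together with $v<v_0\Leftrightarrow S>S_0$.

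The speed inequality is the part I expect to be the main obstacle. At the intersection point $(v,p)$ one has $m^2=-(p-p_0)/(v-v_0)=:-\mathcal{R}$, $|U-u|=|m|v$ and $c^2=-v^2 p_v(v,S)$, hence
\[
c^2-|U-u|^2=v^2\bigl(\mathcal{R}-p_v(v,S)\bigr),
\]
so $|U-u|\lessgtr c$ according to whether the Rayleigh line joining $O$ to $(v,p)$ is flatter or steeper than $\Sigma$ at $(v,p)$. My plan is: (a) note that $c^2-|U-u|^2$ vanishes at $O$ (there $\mathcal{R}\to p_v(v_0,S_0)$); (b) show, using $p_{vv}>0$, that it vanishes at no other point of the Hugoniot locus — equivalently, that there is no sonic point except $O$ — so that $c^2-|U-u|^2$ keeps a constant sign along each branch $\{v<v_0\}$, $\{v>v_0\}$ by continuity; (c) determine that sign from a local expansion at $O$, where $\mathcal{H}$ and $\Sigma_0$ have contact of order at least two with leading correction governed by $p_{vv}$, obtaining $|U-u|<c$ on the branch $v<v_0$ (which by the previous paragraph is the one carrying $S>S_0$) and the reverse on $\{v>v_0\}$. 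Step (b) is the delicate one: differentiating the Hugoniot relation along the locus yields $T\,dS=\tfrac12\bigl[(p-p_0)-(v-v_0)\,dp/dv\bigr]dv$, and the point is that $p_{vv}>0$ rules out $|U-u|=c$, i.e. tangency of the Rayleigh line to the isentrope, away from $O$. This is the classical Bethe–Weyl argument, for which I would follow \cite[Th. 4.1]{Menikoff-Plohr}.
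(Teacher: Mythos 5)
The paper does not prove this statement: it is quoted as the classical Bethe--Weyl theorem with a pointer to Menikoff--Plohr [Th.~4.1], so there is no in-paper proof to compare against and your argument has to stand on its own.

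Your treatment of existence, uniqueness and the equivalence $v<v_0\Leftrightarrow S>S_0$ is correct and is exactly the classical route: the identities $h'(v)=\tfrac12(p_0-p)+\tfrac12 p_v(v-v_0)$ and $h''(v)=\tfrac12 p_{vv}(v-v_0)$, the signs of $h(v_0)$ and $h'(v_0)$ via $e_S=T>0$ and $p_S>0$, and the convexity of the super/sublevel sets of $h$ do all the work. (The asymptotics $h\to\mp\infty$ as $v\to0^+$ and $v\to\infty$ are indeed the structural hypotheses of the reference; for the equation of state \eqref{eq:p-alphaT} they can be checked directly, since along an isentrope $\alpha$ stays away from $0$ and $1$, hence $p$ grows like $T^{5/2}$ while $e=O(T)$.)

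The genuine gap is in the speed inequality: you announce a three-step plan and explicitly defer the crucial step (b) --- absence of sonic points away from $O$ --- to the reference, so as written this part is not a proof. But the continuity/no-sonic-point detour is unnecessary: the inequality is an immediate corollary of the sign of $h'$ at the intersection point, which your own convexity argument already pins down. For $S>S_0$ the unique zero $v_1\in(0,v_0)$ satisfies $h'(v_1)>0$ strictly (if $h'(v_1)=0$, concavity of $h$ on $(0,v_0)$ would force $h\le h(v_1)=0$ there, contradicting $h(v_0)>0$). Dividing
\[
0<h'(v_1)=\tfrac12\bigl(p_0-p_1\bigr)+\tfrac12\,p_v(v_1,S)\,(v_1-v_0)
\]
by $\tfrac12(v_1-v_0)<0$ gives $-\frac{p_1-p_0}{v_1-v_0}<-p_v(v_1,S)$, that is $m^2v_1^2<c^2$, i.e.\ $|U-u|<c$. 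For $S<S_0$ the same computation at the unique zero $v_1>v_0$ (where again $h'(v_1)>0$, now by convexity) yields the reversed inequality. So the part you flagged as ``the main obstacle'' follows in one line from what you had already established; I would replace your steps (a)--(c) by this observation.
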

\par
Since \eqref{eq:euler characteristic speed} and \eqref{e:pS} hold,  Smoller \cite[Th. 18.3]{Smoller} claims the converse of Theorem \ref{thm:Bethe-Weyl} holds: namely, if $p_{vv} > 0$, then $S$ is monotone along the thermodynamic part of the Hugoniot locus.

\begin{nb}\label{nb:invariance_1}
Condition $p_{vv} > 0$ is equivalent to $R_1\cdot \nabla \lambda_1 = R_2\cdot \nabla \lambda_2 > 0$ by \eqref{eq:p_vv}. 
\end{nb} 

\paragraph{Integral curves in the $(\alpha,T)$-plane}
The concavity of the projections of the integral curves in the $(\alpha,T)$-plane has been partly established in \cite{Asakura-Corli_ionized}, where by concavity we mean the concavity as functions of the variable $\alpha$. Here we provide a more precise result.

\begin{theorem}\label{thm:concave T}
The projection of any integral curve on the $(\alpha,T)$-plane is strictly convex for $\frac{T_{\rm i}}{T} \leq 4$ and strictly concave in the region $\frac{T_{\rm i}}{T} >  37.5964$, $0 < \alpha \leq0.25$.
\end{theorem}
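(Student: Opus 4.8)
\emph{Proof plan.} The plan is to use the classical fact that the entropy, hence the dimensionless entropy $\eta$, is a Riemann invariant of both characteristic fields $R_1$ and $R_2$ (the third component of $R_1$ and $R_2$ vanishes). Consequently, by the expression for $\eta(\alpha,T)$ in \eqref{eq:p-alphaT}, the projection on the $(\alpha,T)$-plane of any such integral curve is a level set of
\[
\eta(\alpha,T)=2\log\frac{\alpha}{1-\alpha}+(1+\alpha)\left(\frac52+\frac{\Ti}{T}\right)+C .
\]
Since $\eta_T=-(1+\alpha)\Ti/T^{2}<0$ and $\eta_\alpha=\frac{2}{\alpha(1-\alpha)}+\frac52+\frac{\Ti}{T}>0$, the implicit function theorem represents this level set as the graph of a strictly increasing function $T=T(\alpha)$, and the whole statement amounts to determining the sign of $T''(\alpha)$.

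The first step is to differentiate $\eta(\alpha,T(\alpha))=\text{const}$ twice, which gives the standard expression
\[
T''=-\,\frac{\eta_{\alpha\alpha}\,\eta_T^{2}-2\,\eta_{\alpha T}\,\eta_\alpha\,\eta_T+\eta_{TT}\,\eta_\alpha^{2}}{\eta_T^{3}};
\]
since $\eta_T<0$, the sign of $T''$ is that of the numerator. Inserting the second derivatives of $\eta$ read off from \eqref{eq:p-alphaT} and factoring out the positive quantity $2(1+\alpha)(\Ti/T)/T^{2}$, one is left with the sign of
\[
\Phi(\alpha,T)=\left(\frac{2}{\beta}+\frac52+\frac{\Ti}{T}\right)^{\!2}-\frac{\Ti}{T}\left(\frac{2}{\beta}+\frac52+\frac{\Ti}{T}\right)+\frac{(2\alpha-1)(1+\alpha)\,\Ti}{\beta^{2}\,T},\qquad \beta:=\alpha(1-\alpha).
\]
Expanding the square and using $(2\alpha-1)(1+\alpha)=2\alpha^{2}+\alpha-1$, this collapses to the compact form
\[
\Phi=\frac{4+(2\alpha^{2}+\alpha-1)\,\Ti/T}{\beta^{2}}+\frac{10+2\,\Ti/T}{\beta}+\frac52\left(\frac52+\frac{\Ti}{T}\right).
\]
This reduction is elementary, but it is the computation that must be done with care; everything afterwards is bookkeeping.

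The convex case is then immediate: when $\Ti/T\le4$ the last two terms of $\Phi$ are positive, and the coefficient $4+(2\alpha^{2}+\alpha-1)\,\Ti/T$ of $1/\beta^{2}$ exceeds $4-\Ti/T\ge0$ because $2\alpha^{2}+\alpha-1>-1$ for $\alpha\in(0,1)$; hence $\Phi>0$ and $T''>0$. For the concave case I would multiply $\Phi$ by $\beta^{2}>0$ and group the result as an affine function of $\tau:=\Ti/T$, using $2\alpha^{2}+\alpha-1+2\beta=3\alpha-1$:
\[
\beta^{2}\Phi=N(\alpha)-\tau\,D(\alpha),\qquad N(\alpha)=4+10\beta+\tfrac{25}{4}\beta^{2},\quad D(\alpha)=1-3\alpha-\tfrac52\beta^{2}.
\]
On $0<\alpha\le\tfrac14$ one has $D>0$ ($D$ is decreasing there, with $D(\tfrac14)=83/512>0$), so $T''<0\iff\tau>N(\alpha)/D(\alpha)$.

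It remains to bound $N/D$ on $(0,\tfrac14]$. There $N>0$, $N'=(1-2\alpha)\big(10+\tfrac{25}{2}\beta\big)>0$ and $D'=-3-5\beta(1-2\alpha)<0$, hence $(N/D)'=(N'D-ND')/D^{2}>0$: thus $N/D$ is increasing and attains its supremum at the endpoint, $N(\tfrac14)/D(\tfrac14)=37.5964$. Consequently $\tau>37.5964$ forces $T''<0$ on the stated region, which completes the proof. The only genuine obstacle is the algebraic simplification of the numerator of $T''$ down to $\Phi$; the rest reduces to the sign bookkeeping above, and the gap $4<\Ti/T\le37.5964$ is precisely the range in which this argument pins no definite sign.
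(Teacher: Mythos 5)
Your proposal is correct and follows essentially the same route as the paper: both reduce to the sign of the numerator $\eta_{\alpha\alpha}\eta_T^2-2\eta_{\alpha T}\eta_\alpha\eta_T+\eta_{TT}\eta_\alpha^2$ along the isentrope $T=\mathcal{T}(\alpha)$, arrive at the same threshold function $N(\alpha)/D(\alpha)=\bigl[2+\tfrac52\alpha(1-\alpha)\bigr]^2/\bigl[1-3\alpha-\tfrac52\alpha^2(1-\alpha)^2\bigr]$, and use its monotonicity on $(0,\tfrac14]$ to obtain the value $37.5964$ at $\alpha=\tfrac14$. The only difference is that you prove the convex case $\Ti/T\le 4$ directly from the positivity of $\Phi$, whereas the paper simply cites Lemma 7.1 of the companion work for that part.
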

\begin{proof}
In \cite[Lemma 7.1]{Asakura-Corli_ionized} we already proved that the projection of any integral curve on the $(\alpha,T)$-plane is strictly convex for $\frac{T_{\rm i}}{T} \leq 4$ and strictly concave for small $\alpha.$
Then, we only have to prove the second part of the statement. In \cite{Asakura-Corli_ionized} we denoted by $T=\mathcal{T}(\alpha)$ the integral curve through $(\alpha_0,T_0)$ and found 
\begin{equation}\label{eq:T''}
   \frac{d^2\mathcal{T}(\alpha)}{d\alpha^2} =
-\left. \frac{\eta_{\alpha\alpha}\eta_T^2 - 2 \eta_{\alpha T}\eta_{\alpha}\eta_T + \eta_{TT}\eta_{\alpha}^2}{\eta_T^3}\right|{}_{\left(\alpha,\mathcal{T}(\alpha)\right)},
\end{equation}
where
\begin{align*}
\eta_{\alpha\alpha}\eta_T^2 - 2 \eta_{\alpha T}\eta_{\alpha}\eta_T + \eta_{TT}\eta_{\alpha}^2
&= \frac{2(1+\alpha)}{\alpha^2(1-\alpha)^2}\frac{T_{\rm i}}{T}\left\{-\left[1 - 3\alpha - \frac{5}{2}\alpha^2(1 - \alpha)^2\right]\frac{T_{\rm i}}{T} + 4\left[1 + \frac{5}{4}\alpha(1 - \alpha)\right]^{\! 2}\right\},
\end{align*}
and $\eta_T= - \frac{T_{\rm i}}{T^{2}}(1 + \alpha)$. It is easy to check that $P(\alpha) = 1-3\alpha -\frac52\alpha^2(1-\alpha)^2$ is strictly decreasing in the interval $[0,1]$ and $P(0)=0$, $P(1)=-2$. Then, there is a single point $\bar \alpha\sim 0.2970$ where $P(\bar\alpha)=0$. If $\alpha<\bar\alpha$, then the right side of \eqref{eq:T''} is negative if 
\begin{equation}\label{eq:T''<0}
\frac{T_{\rm i}}{T} >  \frac{\left[2 + \frac{5}{2}\alpha(1 - \alpha)\right]^{\! 2}}{1 - 3\alpha - \frac{5}{2}\alpha^2(1 - \alpha)^2}.
\end{equation}
For $\alpha\in[0,\bar\alpha)$ the denominator $P(\alpha)$ decreases while the numerator increases;  then the right-hand side of \eqref{eq:T''<0} increases. Therefore, fix $\alpha_1 = 1/4=0.25 < 0.2970$ to make things simple; at this point the right-hand side of \eqref{eq:T''<0} equals $37.5964$. The theorem is proved.
\end{proof}
The following result follows by the well known fact that the integral curve through $(\alpha_0,T_0)$ and the Hugoniot curve issuing from the same point have a second-order contact at  $(\alpha_0, T_0)$. 

\begin{corollary}\label{cor:concave T} If $(\alpha_0, T_0)$ satisfies the constraints in Theorem \ref{thm:concave T}, then the thermodynamic part of the Hugoniot curve  of $(\alpha_0, T_0)$ is strictly concave in a neighborhood of $(\alpha_0, T_0).$
\end{corollary}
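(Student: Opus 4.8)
The plan is to deduce the corollary directly from Theorem~\ref{thm:concave T} by invoking the second-order contact between the thermodynamic part of the Hugoniot locus and the isentrope through the base point. Recall the classical Bethe--Weyl fact (part of the analysis in Menikoff--Plohr \cite{Menikoff-Plohr}, and consistent with Theorem~\ref{thm:Bethe-Weyl} above): parametrizing the thermodynamic Hugoniot locus of $(v_0,S_0)$ by $v$ and writing $S=S_H(v)$ along it, one has $S_H(v_0)=S_0$, $S_H'(v_0)=0$ and $S_H''(v_0)=0$, whereas $S\equiv S_0$ along the isentrope; hence the two curves have contact of order at least two at $(v_0,S_0)$ in the $(v,S)$-plane. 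Here the thermodynamic regularity needed for this statement is guaranteed by \eqref{eq:euler characteristic speed} and \eqref{e:pS}, exactly as noted before Remark~\ref{nb:invariance_1}.

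The first step is to transport this to the $(\alpha,T)$-plane. The map $(v,S)\mapsto(\alpha,T)$ is a local diffeomorphism near the base point: given $(\alpha,T)$ one recovers $p$ from \eqref{eq:p-alphaT}, then $v$ from the equation of state \eqref{eq:EOS}, while $S=a^2\eta(\alpha,T)$, so the map is invertible with smooth inverse. Since second-order contact of two curves at a common point is a diffeomorphism-invariant notion (equality of $2$-jets), the image curves still osculate to second order at $(\alpha_0,T_0)$. Under this map the isentrope becomes the curve $\eta(\alpha,T)=\eta(\alpha_0,T_0)$, that is, the projection $T=\mathcal{T}(\alpha)$ of the $1$- and $2$-integral curves used in Theorem~\ref{thm:concave T} (these projections are precisely the level sets of $\eta$), and the Hugoniot curve becomes, by Theorem~\ref{thm:Hugoniot curve alpha T}, the graph of a function $T=T_H(\alpha)$, strictly increasing near $\alpha_0$; moreover $\alpha$ is a legitimate parameter also along the isentrope since $\eta_T=-\frac{\Ti}{T^2}(1+\alpha)\ne0$. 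Both curves being graphs over the common variable $\alpha$, second-order contact means $T_H(\alpha_0)=\mathcal{T}(\alpha_0)=T_0$, $T_H'(\alpha_0)=\mathcal{T}'(\alpha_0)$, and crucially $T_H''(\alpha_0)=\mathcal{T}''(\alpha_0)$.

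The conclusion is then immediate. Under the hypotheses $\frac{\Ti}{T_0}>37.5964$ and $0<\alpha_0\le0.25$, Theorem~\ref{thm:concave T} gives $\mathcal{T}''(\alpha_0)<0$ strictly, hence $T_H''(\alpha_0)<0$. Since $T_H$ is smooth near $\alpha_0$ (the thermodynamic relation \eqref{e:T-T+-n} together with \eqref{eq:p-alphaT} is smooth in $(\alpha_-,T_-)$ away from the base point, and the implicit function theorem applies thanks to Theorem~\ref{thm:Hugoniot curve alpha T}), the function $T_H''$ is continuous, so $T_H''<0$ on a whole interval of $\alpha$-values around $\alpha_0$; this is the claimed strict concavity of the thermodynamic part of the Hugoniot curve in a neighborhood of $(\alpha_0,T_0)$.

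The one delicate point is the bookkeeping around the change of parametrization: one must make sure that the classical relations $S_H'(v_0)=S_H''(v_0)=0$ are genuinely available in the present model (they are, by the remark just cited) and, above all, that re-expressing both curves as graphs over $\alpha$ does not corrupt the second-order contact. This is why I would state the contact invariantly, as coincidence of $2$-jets at the common point with common tangent, rather than by juggling derivatives with respect to some particular parameter; once that is set up, everything reduces to a direct appeal to Theorems~\ref{thm:Hugoniot curve alpha T} and~\ref{thm:concave T} plus continuity.
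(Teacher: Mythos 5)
Your proposal is correct and follows exactly the paper's route: the paper proves this corollary by the single observation that the integral curve (isentrope) through $(\alpha_0,T_0)$ and the Hugoniot curve from the same point have second-order contact there, so the strict inequality $\mathcal{T}''(\alpha_0)<0$ from Theorem~\ref{thm:concave T} transfers to the Hugoniot graph and persists nearby by continuity. Your write-up merely makes explicit the coordinate-invariance of the second-order contact and the graph reparametrization over $\alpha$, which the paper leaves implicit.
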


\section{Approximate Hugoniot loci and genuine nonlinearity}\label{sec:incident shock}
%
%
As we showed in the previous section, the $1$ and $2$-characteristic directions are not genuinely nonlinear in a small zone close to the origin in the $(\alpha,T)$-plane. Indeed, this region is avoided in the experiments in \cite{Fukuda-Okasaka-Fujimoto} and our theoretical construction in the next section makes precisely this assumption. In order to justify this assumption, in this section we provide, by an approximation argument, some explicit conditions for the incident shock to be in a genuinely nonlinear region.

More precisely, we deal with  Regime 2 in \cite{Fukuda-Okasaka-Fujimoto}. With reference to Figure \ref{fig:shock reflection},  the velocity of the particles on the back side of the incident shock wave, coincides with the velocity of the head of a beam (or arc) in the electromagnetic shock tube, which corresponds to the velocity of the head of the piston for a classical shock tube. We assume that the right state ($+$ subscript) is at chamber temperature $T_+>0$ and pressure $p_+>0$; moreover, the gas is at rest, i.e., $u_+=0$. There, the degree of ionization is almost $0$ but we cannot assume $\alpha_+=0$:  this is precluded by Saha's law \eqref{eq:deg-ionization-n}.  This issue can also be noticed in the thermodynamic part of the Rankine-Hugoniot condition \eqref{eq:p_+/p_-}, where the pressure becomes singular for $\alpha_+=0$.  However, for $T_{+}$ in the range of the experiments of \cite{Fukuda-Okasaka-Fujimoto}, the ionization degree $\alpha_+$ is extremely small, and this is also the case of the term $\exp(-\Ti/(2T_{+}))$,
see Appendix \ref{sec:appendix_numerical_values}, which is comparable to $\alpha_+$. Then, in order to simplify the problem without losing any important information, we now show a way of approximating the Hugoniot locus that exploits this remark.

\begin{figure}[hbt]
\centering
 \includegraphics[width =.60\linewidth]{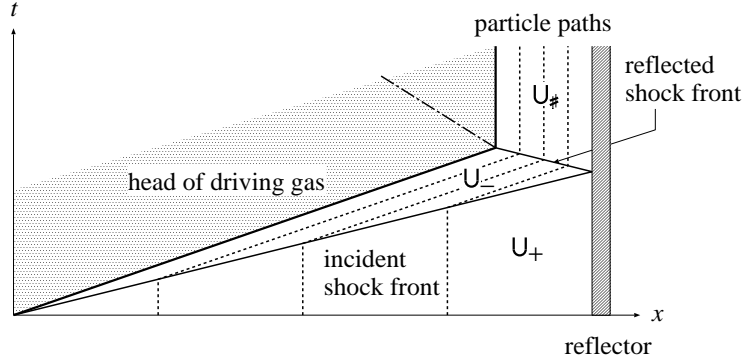}
 \caption{Reflection of a shock wave.}\label{fig:shock reflection}
\end{figure}
\paragraph{Approximate Hugoniot loci}
Fix $T_+,p_+$; Saha's formula \eqref{eq:deg-ionization-n} for the $+$ states can be written as  
\begin{equation}\label{e:alpha_+A}
   \alpha_+ = A_+e^{-\frac{\Ti}{2T_{+}}}\quad \hbox{ for }\quad  A_+ = A(\alpha_+,T_+)=\left(\kappa p_{+} T_{+}^{-\frac{5}{2}} + e^{-\frac{\Ti}{2T_{+}}}\right)^{-\frac{1}{2}}.
\end{equation}
By  \eqref{eq:p-alphaT} we have $p_+= \frac1\kappa\,(1 - \alpha_+^2)A_+^2T_+^{\frac{5}{2}}$ and identity \eqref{eq:p_+/p_-} becomes
\begin{equation}\label{e:pA}
\frac{p_+}{p_-} 
= \frac{\alpha_-^2}{A_+^2}\left(\frac{1 - \alpha_+^2}{1 - \alpha_-^2}\right) \left(\frac{T_{+}}{T_{-}}\right)^{\frac{5}{2}} e^{\frac{\Ti}{T_{-}}}.
\end{equation}
The data in Appendix \ref{sec:appendix_numerical_values} show that both $\alpha_+$ and $\exp(-\Ti/T_{+})$ are extremely small but comparable; in those cases we compute $A_+ = 5.9556$ and $A_+ = 18.7224$, respectively. Therefore, we assume that
\begin{equation}\label{eq:approximation alpha_+ = 0}
  \alpha_+ \sim 0, \quad e^{-\frac{\Ti}{T_{+}}} \sim 0.
\end{equation}
\par\noindent
In the above approximation we have, by \eqref{eq:p-alphaT},
\begin{equation}\label{eq:v_+ e_+}
     v_+ = \frac{RT_+}{Mp_+}, \quad e_+ = \frac{3RT_+}{2M}, \quad
   S_+ = \frac{R}{M}\left(\log p_+ -\frac{5}{2}\log T_+ \right) + \text{Const.}
\end{equation}
All quantities involved in \eqref{eq:p-alphaT} are well defined in the approximation \eqref{eq:approximation alpha_+ = 0}.
By \eqref{eq:p-alphaT} and \eqref{e:alpha_+A} we deduce
\begin{equation}\label{e:hatA+}
A_+ \sim \widehat{A}_+ = \sqrt{\frac{T_{+}^{5/2}}{\kappa  p_{+}}}.
\end{equation}
The values of $\widehat{A}_+$ differ from those of $A_+$ for the last digit only. Conditions \eqref{eq:T_-/T_+prop-n}-\eqref{eq:p_+/p_-} become
\begin{align}\label{eq:RH alpha_+ = 0}
&  \frac{T_-}{T_+}\left[\left(4 + \frac{p_+}{p_-}\right)(1 + \alpha_-) +  \frac{2\Ti}{T_{-}}\alpha_-\right] 
=  4 + \frac{p_-}{p_+},
\\
&\frac{p_+}{p_-} 
= \frac{\alpha_-^2}{\widehat{A}_+^2(1 - \alpha_-^2)} \left(\frac{T_{+}}{T_{-}}\right)^{\frac{5}{2}} e^{\frac{\Ti}{T_{-}}}. 
\label{eq:approximate p_+/p_-}
\end{align}
Equation \eqref{eq:RH alpha_+ = 0}, with $p_+/p_-$ provided by \eqref{eq:approximate p_+/p_-}, is considered as an {\em approximation} of the thermodynamic part of the Rankine-Hugoniot condition; notice that the term $p_+/p_-$ is no more singular for $\alpha_+=0$. We emphasize that assumption \eqref{eq:approximation alpha_+ = 0} is just an approximation of the thermodynamic part of the Rankine-Hugoniot conditions at low degree of ionization and temperature; it is not an approximation of Saha's equation, as we did in \cite{Asakura-Corli_ionized} for the High-Temperature-Limit model.

We recall that the pressure is a strictly increasing function of $\alpha$ along the Hugoniot curve \cite[Prop. 5.4]{Asakura-Corli_ionized} and then we may assume that also in the approximation \eqref{eq:approximation alpha_+ = 0} the limit $p_-(\alpha,T(\alpha))\to p_*$ exists for $\alpha_-\to0$, for some $p_*$. Passing to the limit for $\alpha_-\to0$ and $T_-\to T_+$ in \eqref{eq:RH alpha_+ = 0} we find $p_*=p_-$. At last, to obtain an asymptotic form of the Hugoniot locus as $\alpha_- \to 0$ and $T_{-} \to T_{+}>0,$ we have to assume that condition \eqref{eq:approximation alpha_+ = 0} also holds for $\alpha_-, T_{-}$. Thus, by passing to the limit for $\alpha_-\to0$ in \eqref{eq:approximate p_+/p_-} we deduce $\alpha_- \sim \widehat{A}_+\exp(-\Ti/(2T_{-})).$
\begin{theorem}\label{thm:GN region}
Fix $T_+$, $p_+$ and assume the approximation condition \eqref{eq:approximation alpha_+ = 0}. If $T_->T_+$, then
\begin{equation}\label{e:alphaless}
0 < \alpha_-  
< \widehat{A}_+ \left(\frac{T_{-}}{T_{+}}\right)^{\frac{3}{4}}e^{-\frac{T_{\rm i}}{2T_{-}}} 
= \sqrt{\frac{T_{+}T_{\rm i}^{\frac{3}{2}}}{\kappa p_{+} }}\left(\frac{T_{-}}{T_{\rm i}}\right)^{\frac{3}{4}}e^{-\frac{T_{\rm i}}{2T_{-}}}.
\end{equation}
Moreover, if the condition 
\begin{equation}\label{e:asyT}
  \sqrt{\frac{T_{+}T_{\rm i}^{\frac{3}{2}}}{\kappa p_{+} }} \leq 5.1670\times 10^{9}
\end{equation}
is fulfilled, then the Hugoniot curve issuing from $(p_+, T_+)$ is located in a genuinely nonlinear region.
\end{theorem}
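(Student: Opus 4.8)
The plan is to argue in two stages: first derive the explicit bound \eqref{e:alphaless} on $\alpha_-$ along the approximate thermodynamic Hugoniot curve of $(p_+,T_+)$, and then feed it into the genuine nonlinearity criterion of Theorem~\ref{thm:genuine nonlinearity}.

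\emph{Stage 1.} Fix a point $(\alpha_-,T_-)$ of the Hugoniot locus with $T_->T_+$. By Theorem~\ref{thm:Hugoniot curve alpha T} the thermodynamic part of this locus is the graph of a strictly increasing function $T=T(\alpha)$, so $T_->T_+$ forces $\alpha_->\alpha_+$; since the pressure is strictly increasing along the Hugoniot locus by \cite[Prop. 5.4]{Asakura-Corli_ionized} — a property we assume to persist under the approximation \eqref{eq:approximation alpha_+ = 0}, as already used above — this gives $p_->p_+$. The Rankine-Hugoniot relation $\eqref{eq:Rankine-Hugoniot V}_1$, that is $(V_+-V_-)^2=-(p_+-p_-)(v_+-v_-)$, then forces $v_+\ge v_-$, because its left-hand side is nonnegative while $p_+-p_-<0$. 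Combining the exact equation of state \eqref{eq:EOS}, in the form $p_-v_-=a^2(1+\alpha_-)T_-$, with the approximate identity $p_+v_+=a^2T_+$ from \eqref{eq:v_+ e_+} yields
\begin{equation*}
  \frac{p_-}{p_+}=(1+\alpha_-)\,\frac{T_-}{T_+}\,\frac{v_+}{v_-}\ \ge\ (1+\alpha_-)\,\frac{T_-}{T_+}\ >\ (1-\alpha_-^2)\,\frac{T_-}{T_+},
\end{equation*}
the last inequality being strict since $\alpha_->0$. Rewriting \eqref{eq:approximate p_+/p_-} so as to isolate $\alpha_-^2$ and inserting this lower bound on $p_-/p_+$ gives $\alpha_-^2<\widehat A_+^2\,(T_-/T_+)^{3/2}e^{-\Ti/T_-}$, which is the first inequality in \eqref{e:alphaless}; the closed form on the far right of \eqref{e:alphaless} follows by substituting $\widehat A_+=\sqrt{T_+^{5/2}/(\kappa p_+)}$ from \eqref{e:hatA+} and regrouping the powers of $T_+$, $T_-$ and $\Ti$.

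\emph{Stage 2.} Write $B=\sqrt{T_+\Ti^{3/2}/(\kappa p_+)}$, so that \eqref{e:alphaless} reads $\alpha_-<B\,(T_-/\Ti)^{3/4}e^{-\Ti/(2T_-)}$. By Theorem~\ref{thm:genuine nonlinearity} genuine nonlinearity already holds at any point with $\Ti/T_-\le54.5375$, so it suffices to check $\alpha_-\le60(T_-/\Ti)^3$ at the points of the curve where $\Ti/T_->54.5375$. With $x=\Ti/T_-$, this last inequality follows from \eqref{e:alphaless} as soon as $B\le60\,x^{-9/4}e^{x/2}=:g(x)$. Since $g'(x)=60\,x^{-13/4}e^{x/2}\bigl(\tfrac{x}{2}-\tfrac94\bigr)>0$ for $x>\tfrac92$, the function $g$ is strictly increasing on $(54.5375,\infty)$, and therefore it is enough to require $B\le g(54.5375)$; a numerical evaluation of $g(54.5375)=60\cdot54.5375^{-9/4}\,e^{54.5375/2}$ gives the threshold $5.1670\times10^{9}$ appearing in \eqref{e:asyT}. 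Thus, under \eqref{e:asyT}, every point of the Hugoniot curve satisfies one of the two hypotheses of Theorem~\ref{thm:genuine nonlinearity}, which is the claim.

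The only step requiring genuine care is the compression estimate $v_+\ge v_-$: it has to be obtained purely from the monotonicity of the Hugoniot data (Theorem~\ref{thm:Hugoniot curve alpha T} and \cite[Prop. 5.4]{Asakura-Corli_ionized}) together with the Rankine-Hugoniot relations, so as not to tacitly presuppose the genuine nonlinearity we are proving. Everything else is routine: the passage from $p_-/p_+>(1-\alpha_-^2)T_-/T_+$ to \eqref{e:alphaless} is a direct manipulation of \eqref{eq:approximate p_+/p_-}, and the monotonicity of $g$ reduces the whole genuine nonlinearity requirement to the single numerical inequality $B\le g(54.5375)\approx5.1670\times10^{9}$.
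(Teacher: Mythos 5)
Your Stage~2 is exactly the paper's argument (the paper works with $\xi=\Ti/T_-$ and $f(\xi)=\xi^{9/4}e^{-\xi/2}$, decreasing for $\xi>9/2$, which is your $g$ up to taking reciprocals, and reduces everything to the same numerical check at $54.5375$), so the only point at issue is Stage~1, and there you have a genuine gap. The bound \eqref{e:alphaless} is a statement about the \emph{approximate} Hugoniot locus, i.e.\ about solutions of \eqref{eq:RH alpha_+ = 0}--\eqref{eq:approximate p_+/p_-}; but every ingredient you use to reach the key inequality $p_-/p_+\ge(1+\alpha_-)T_-/T_+$ is a property of the \emph{exact} locus: the monotonicity of $T(\alpha)$ from Theorem~\ref{thm:Hugoniot curve alpha T}, the monotonicity of $p$ along the curve from \cite[Prop.~5.4]{Asakura-Corli_ionized}, and the solvability of the kinetic part with real velocities, which is what lets you read $(V_+-V_-)^2\ge0$ as $(p_+-p_-)(v_+-v_-)\le0$ and hence $v_-\le v_+$. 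None of these has been established for the approximate curve, and you concede as much (``a property we assume to persist''). Since the purpose of the theorem is precisely to control the approximate curve by explicit means, these transfers cannot be assumed; the compression estimate $v_-\le v_+$, which you correctly single out as the delicate point, is exactly where the argument is not closed.

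The paper avoids all of this with a short self-contained computation: setting $\chi=p_+/p_-$ as given by \eqref{eq:approximate p_+/p_-}, equation \eqref{eq:RH alpha_+ = 0} becomes the quadratic
\begin{equation*}
\Gamma(\chi)=(1+\alpha_-)\tfrac{T_-}{T_+}\chi^2+2\left[2(1+\alpha_-)\tfrac{T_-}{T_+}+\tfrac{\Ti}{T_+}\alpha_--2\right]\chi-1=0,
\end{equation*}
which has a unique positive root; since $\Gamma(0)=-1<0$ and $\Gamma(T_+/T_-)>0$ when $T_-\ge T_+$, that root satisfies $\chi\le T_+/T_-$, i.e.\ $p_-/p_+\ge T_-/T_+$, and \eqref{e:alphaless} then follows from the definition of $\chi$ by the same algebra you carry out. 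Your stronger inequality $p_-/p_+\ge(1+\alpha_-)T_-/T_+$ is in fact also true for the approximate locus and can be obtained the same way, by checking $\Gamma\bigl(T_+/((1+\alpha_-)T_-)\bigr)>0$; so the repair is not to abandon your Stage~1 but to replace the appeal to compressivity and to exact-locus monotonicity by this direct sign check on the quadratic. With that substitution the proof is complete.
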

\begin{proof}
We denote 
$$
\chi = \frac{\alpha_-^2}{\widehat{A}_+^2(1 - \alpha_-^2)} \left(\frac{T_{-}}{T_{+}}\right)^{-\frac{5}{2}}
e^{\frac{T_{\rm i}}{T_{-}}} = \frac{\kappa p_+\alpha_-^2}{(1 - \alpha_-^2)T_{-}^{\frac{5}{2}}}
e^{\frac{T_{\rm i}}{T_{-}}}.
$$
Then \eqref{eq:RH alpha_+ = 0} turns out to be
$$
\frac{T_{-}}{T_{+}}\left[4(1 + \alpha_-) + (1 + \alpha_-)\chi + \frac{2T_{\rm i}}{T_{-}}\alpha_-\right]
=
4 + \frac{1}{\chi},
$$
which gives the following quadratic equation for $\chi$
\begin{equation}\label{e:Gamma}
  \Gamma(\chi) = (1 + \alpha_-)\frac{T_-}{T_+}\chi^2 + 2 \left[2(1 + \alpha_-)\frac{T_-}{T_+} + \frac{T_{\rm i}}{T_+}\alpha_- - 2\right]\chi - 1 = 0.
\end{equation}
This equation has a unique positive root. Since $\frac{T_-}{T_+} \geq 1$, then $\Gamma(0) = -1 < 0$ and 
\begin{align*}
   \Gamma\left(\frac{T_+}{T_-}\right)
&= \alpha_-\frac{T_+}{T_-} + 2 \alpha_-\left(\frac{2T_-}{T_+} + \frac{T_{\rm i}}{T_+}\right)\frac{T_+}{T_-} + 3\left(1 - \frac{T_+}{T_-}\right)> 0.
\end{align*}
We conclude that $0 < \chi \leq \frac{T_+}{T_-}$, which yields \eqref{e:alphaless}. To prove the second part of the statement, we denote
\[
B = \sqrt{\frac{T_{+}T_{\rm i}^{3/2}}{\kappa p_{+} }}, \qquad g(x) = x^\frac34 e^{-\frac{1}{2x}}.
\]
By Theorem \ref{thm:genuine nonlinearity}, if $\frac{T}{T_{\rm i}} > \frac{1}{54.5375}$, then the statement is true. Since \eqref{e:alphaless} can be written as $0 < \alpha_- < B\, g(T_-/T_{\rm i})$,  we are left to prove 
\begin{equation}\label{e:superclaim}
B\, g\left(\frac{T_-}{T_{\rm i}}\right) \le 60 \left(\frac{T_-}{T_{\rm i}}\right)^3\quad  \hbox{ for } \quad\frac{T_-}{T_{\rm i}}\le \frac{1}{54.5375}.
\end{equation}
Condition \eqref{e:asyT} is equivalent to require that the inequality in \eqref{e:superclaim} is satisfied at $\frac{T_-}{T_{\rm i}} = \frac{1}{54.5375}$.
To prove \eqref{e:superclaim} for $\frac{T_-}{T_{\rm i}} < \frac{1}{54.5375}$, by setting $\xi = \frac{1}{x}$ and $f(\xi) = \xi^{\frac{9}{4}}e^{-\frac{\xi}{2}}$, we equivalently need to prove that 
\begin{equation}\label{e:fff}
   f(\xi) < 60/B\quad \hbox{ for }\quad \xi > 54.5375. 
\end{equation}
The function $f$ has a unique maximum at $\xi = \frac{9}{2}$ and decreases if $\xi > \frac{9}{2}$.
By \eqref{e:asyT}, we have $f(54.5375)= 1.1612 \times 10^{-8}\le 60/B$.
This proves \eqref{e:fff} and then the theorem. 
\end{proof}
Condition \eqref{e:asyT} is largely satisfied by the experimental data, see Appendix \ref{sec:appendix_numerical_values}. Therefore, in the following we can always think that genuine nonlinearity holds in the regions we consider. We point out, however, that most of the computations below do not rely on this assumption.

\section{Variation of the temperature across a shock wave}\label{sec:temperature}
In this section we study the variation of the temperature caused by an incident shock wave. We are mainly concerned with a forward (incident) shock wave but, by interchanging the role of the left and right states, all results are true for a backward (reflected) shock wave. Let us fix $(\alpha_+,u_+,T_+)$;
for a state $(\alpha_-,u_-, T_-)$ we denote
\begin{equation}\label{e:ThetaDeltaD}
   \Theta = \frac{\theta_-}{\theta_+} - 1, \quad
   d = \frac{\Ti}{\theta_+}(\alpha_- - \alpha_+), \quad
   D =  \frac{ (u_- - u_+)^2}{a^2\theta_+}.
\end{equation}
\begin{lemma}\label{l:Theta>0}
Let $(\alpha_-,u_-,T_-)$ and $(\alpha_+,u_+,T_+)$ be connected by a forward shock wave. Then
\begin{equation}\label{eq:condition Delta}
  d < \frac{D}{2}\sqrt{1 + \frac{4}{D}}
\end{equation}
and 
\begin{equation}\label{eq:Theta2}
     \Theta = \frac{D - 8d + 2\sqrt{(2D - d)^2 + 15D}}{15}>0.
\end{equation}
\end{lemma}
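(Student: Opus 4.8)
The plan is to collapse the kinetic and thermodynamic parts of the Rankine--Hugoniot conditions, \eqref{eq:kinetic} and \eqref{e:T-T+-n}, into one quadratic equation for the single unknown $\Theta$, and then to read off both \eqref{eq:condition Delta} and \eqref{eq:Theta2} from its discriminant and the sign of its constant term.

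First I would put $r=p_-/p_+>0$ and use the equation of state \eqref{eq:EOS} in the form $pv=a^{2}\theta$, so that $v_\pm=a^{2}\theta_\pm/p_\pm$ and hence $v_-/v_+=(1+\Theta)/r$. Substituting this into \eqref{eq:kinetic} and clearing the denominator gives
\begin{equation*}
  r^{2}-(2+\Theta+D)\,r+(1+\Theta)=0.\tag{$\star$}
\end{equation*}
Dividing \eqref{e:T-T+-n} by $\theta_+$, recognizing that $2\Ti(\alpha_--\alpha_+)/\theta_+=2d$ by \eqref{e:ThetaDeltaD}, and using $(\star)$ to replace $(1+\Theta)/r$ by $2+\Theta+D-r$, one is left with the linear relation
\begin{equation*}
  r=1+\tfrac12\bigl(5\Theta+2d+D\bigr).\tag{$\star\star$}
\end{equation*}
Inserting $(\star\star)$ into $(\star)$ and simplifying --- a routine computation --- yields
\begin{equation*}
  15\,\Theta^{2}+2(8d-D)\,\Theta+\bigl(4d^{2}-D^{2}-4D\bigr)=0,\tag{$\star\star\star$}
\end{equation*}
whose discriminant equals $16\bigl[(2D-d)^{2}+15D\bigr]\ge 0$; thus $(\star\star\star)$ has the two real roots $\dfrac{(D-8d)\pm 2\sqrt{(2D-d)^{2}+15D}}{15}$, the larger of which is the right-hand side of \eqref{eq:Theta2}.

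To finish I must decide which root is $\Theta$ and derive the sign statements, and for this I would use the only non-algebraic ingredient: for a forward shock the state behind the front satisfies $\alpha_->\alpha_+$ and $T_->T_+$ --- a consequence of Theorem \ref{thm:Hugoniot curve alpha T} together with the Lax conditions \eqref{eq:shock_conditions} and \cite[Prop.~5.4]{Asakura-Corli_ionized} --- so that $\theta_-=T_-(1+\alpha_-)>T_+(1+\alpha_+)=\theta_+$; that is, $d>0$, $D>0$ and $\Theta>0$. Now $\Theta$ solves $(\star\star\star)$; were its constant term $4d^{2}-D^{2}-4D$ nonnegative, the two roots would have equal sign, hence both be positive (one being $\Theta$), so their sum $(2D-16d)/15$ would be positive, forcing $0<d<D/8$ and therefore $4d^{2}<D^{2}/16<D^{2}+4D$, a contradiction. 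Hence $4d^{2}<D^{2}+4D$, which, since $d>0$, is exactly \eqref{eq:condition Delta} after rewriting $\tfrac12\sqrt{D^{2}+4D}=\tfrac{D}{2}\sqrt{1+\tfrac{4}{D}}$. With the constant term now negative, the two roots of $(\star\star\star)$ have opposite signs, so the positive one --- namely $\Theta$ --- is the larger, which is \eqref{eq:Theta2}; its positivity is then built in.

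The manipulations producing $(\star)$, $(\star\star)$ and $(\star\star\star)$ are elementary once the equation of state is used to eliminate the specific volumes; the step I expect to be the real obstacle is the last one, i.e.\ establishing that $\Theta>0$ and $d>0$ across a forward shock --- equivalently, that the admissible branch of the thermodynamic Hugoniot locus is the one along which $\alpha$, $T$ and hence $\theta$ all increase --- for which Theorem \ref{thm:Hugoniot curve alpha T} and the earlier results of \cite{Asakura-Corli_ionized} are indispensable.
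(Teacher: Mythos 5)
Your proof is correct and follows essentially the same route as the paper's: both collapse the kinetic and thermodynamic Rankine--Hugoniot relations into the quadratic $15\Theta^2-2(D-8d)\Theta+4d^2-D^2-4D=0$, import $\Theta>0$ from the earlier results ($\alpha_->\alpha_+$, $T_->T_+$ along the Hugoniot locus), and obtain \eqref{eq:condition Delta} and \eqref{eq:Theta2} from the sign analysis of the two roots. The only (cosmetic) difference is that you eliminate $p_-/p_+$ through a linear relation in $\Theta$ rather than equating the two explicit square-root formulas for $\Pi$ and squaring twice, which is if anything slightly tidier.
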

\begin{proof}

If we denote $\Pi = \frac{p_{-}}{p_{+}}$ then, by \eqref{e:T-T+-n}, $\Pi$ solves the quadratic equation \cite[(5.15)]{Asakura-Corli_ionized}
$$
   \theta_+\Pi^2 - 2\left[2(\theta_- - \theta_+) + \Ti(\alpha_- - \alpha_+)\right]\Pi - \theta_- = 0
$$
and then \cite[(5.16)]{Asakura-Corli_ionized}
\begin{equation}
  \Pi = 2\left(\frac{\theta_-}{\theta_+} - 1\right) + \frac{\Ti}{\theta_+}(\alpha_- - \alpha_+) + \sqrt{\left[2\left(\frac{\theta_-}{\theta_+} - 1\right) + \frac{\Ti}{\theta_+}(\alpha_- - \alpha_+)\right]^{\!2} + \frac{\theta_-}{\theta_+}}. \label{eq:p_-/p_+ RH}
\end{equation}
On the other hand, by \eqref{eq:kinetic} we have
$$
\frac{p_{-}}{p_{+}} + \frac{v_{-}}{v_{+}} = \frac{p_{-}}{p_{+}} + \frac{p_{+}\theta_-}{p_{-}\theta_+}= 1 +  \frac{\theta_-}{\theta_+} +   \frac{ (u_- - u_+)^2}{a^2\theta_+}. 
$$
Therefore $\Pi$ also satisfies the quadratic equation
$$
 \Pi^2 - \left[1 +  \frac{\theta_-}{\theta_+} +   \frac{ (u_- - u_+)^2}{a^2\theta_+} \right]\Pi + \frac{\theta_-}{\theta_+} = 0
$$
and then another expression of $\Pi$ is 
\begin{equation}\label{e:Pisecond}
    \Pi = \frac{1}{2}\left[1 +  \frac{\theta_-}{\theta_+} +   \frac{ (u_- - u_+)^2}{a^2\theta_+} \right] \pm \sqrt{\frac{1}{4}\left[1 +  \frac{\theta_-}{\theta_+} +   \frac{ (u_- - u_+)^2}{a^2\theta_+} \right]^{\! 2} -  \frac{\theta_-}{\theta_+}}.
\end{equation}
One of these two roots must coincide with \eqref{eq:p_-/p_+ RH}.
We notice that \cite[Prop. 5.2 and Th. 5.1]{Asakura-Corli_ionized} imply $T_{-}>T_{+}$ and $\alpha_->\alpha_+$ for given $(\alpha_+,T_{+})$; hence $\theta_->\theta_+$ and then $\Theta>0$. If we denote
$$
    A = 2\left(\frac{\theta_-}{\theta_+} - 1\right) + \frac{\Ti}{\theta_+}(\alpha_- - \alpha_+), \quad
    B = 1 +  \frac{\theta_-}{\theta_+} +   \frac{ (u_- - u_+)^2}{a^2\theta_+}, \quad
    C = \frac{\theta_-}{\theta_+},
$$ 
then, by equating \eqref{eq:p_-/p_+ RH} with \eqref{e:Pisecond} we get
$
   2A + 2\sqrt{A^2 + C} = B \pm\sqrt{B^2 - 4C}.
$
By squaring two times and noticing that $A = 2\Theta + d,\, B = 2 + \Theta + D$, we find a quadratic equation for $\Theta:$
\begin{equation}\label{eq:eq_Theta}
  15 \Theta^2 - 2(D - 8d) \Theta + 4d^2 - D^2 - 4D = 0.
\end{equation}
Equation \eqref{eq:eq_Theta} has two distinct real roots, one of which must be positive, since we already proved $\Theta>0$. If both roots were positive, we should have $D/8 > d$ and $4d^2 - D^2 - 4D > 0$
and hence
$$
   0 < \frac{D^2}{16} - D^2 - 4D = - \frac{15}{16} D^2 - 4D, 
$$
which is a contradiction. Thus one root is negative and $4d^2 - D^2 - 4D < 0.$ This condition is equivalent to \eqref{eq:condition Delta} and the positive root of \eqref{eq:eq_Theta} is given by \eqref{eq:Theta2}.
\end{proof}

\begin{corollary}\label{c:ThetaD}
If $D\ge\frac13$, then $\Theta<D$.
\end{corollary}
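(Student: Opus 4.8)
The plan is to argue directly from the explicit formula \eqref{eq:Theta2} for $\Theta$. By \eqref{eq:Theta2}, the desired inequality $\Theta < D$ is equivalent to
$$D - 8d + 2\sqrt{(2D - d)^2 + 15D} < 15D,$$
that is, to $2\sqrt{(2D-d)^2 + 15D} < 14D + 8d$. The first thing to check is that the right-hand side is positive, so that squaring both sides is an equivalence: this follows from $D \ge \frac13 > 0$ together with $d > 0$, the latter being exactly the fact already invoked in the proof of Lemma \ref{l:Theta>0}, namely that $\alpha_- > \alpha_+$ across the shock, so that $d = \frac{\Ti}{\theta_+}(\alpha_- - \alpha_+) > 0$.

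Next I would square both sides, reducing the claim to $4[(2D-d)^2 + 15D] < (14D+8d)^2$. Expanding both sides and collecting terms, everything collapses — after dividing by $60$ — to the clean inequality
$$D < 3D^2 + 4Dd + d^2.$$
This is where the hypothesis $D \ge \frac13$ enters: it gives $3D^2 - D = D(3D-1) \ge 0$, while $4Dd + d^2 > 0$ since $d > 0$. Adding these two yields $3D^2 + 4Dd + d^2 > D$, the required strict inequality, and the corollary follows.

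I do not expect a genuine obstacle in this argument; the only points requiring care are the sign bookkeeping when squaring (ensuring $14D + 8d > 0$) and tracking that the final inequality is strict, which is guaranteed by $d > 0$ rather than merely $d \ge 0$. It is worth noting that the constraint \eqref{eq:condition Delta} on $d$ plays no role here, and that the threshold $D \ge \frac13$ is precisely what makes $3D^2 \ge D$, so one cannot expect the conclusion from this line of reasoning when $D < \frac13$.
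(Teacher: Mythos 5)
Your argument is correct. The algebra checks out: the inequality $\Theta<D$ is, by \eqref{eq:Theta2}, equivalent to $2\sqrt{(2D-d)^2+15D}<14D+8d$; both sides are nonnegative (using $d>0$, which holds since $\alpha_->\alpha_+$ across the shock), so squaring is legitimate, and the expansion indeed collapses to $D<3D^2+4Dd+d^2$, which follows from $D(3D-1)\ge0$ and $4Dd+d^2>0$. This is a genuinely more direct route than the paper's. The paper first establishes the intermediate, $d$-free bound $\Theta<\bigl(D+2\sqrt{4D^2+15D}\bigr)/15$ by a case split on whether $d\le 2D$ or $d>2D$ (bounding the square root by $|2D-d|+\sqrt{15D}$ in the second case), and only then invokes $D\ge\frac13$ in the form $15D\le 45D^2$ to conclude $\Theta<(D+14D)/15=D$. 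Your single squaring step avoids the case analysis entirely and makes transparent exactly where the threshold $\frac13$ enters, at the cost of not producing the reusable intermediate estimate \eqref{e:Theta-claim}; note that the paper's version of that estimate holds for all $D>0$ and isolates the dependence on $d$, which is the kind of bound one might want elsewhere. Your closing observations are also accurate: \eqref{eq:condition Delta} is not needed, and strictness is secured by $d>0$ even at $D=\frac13$.
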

\begin{proof}
First, we claim that 
\begin{equation}\label{e:Theta-claim}
	\Theta < \frac{D + 2\sqrt{4D^2 + 15D}}{15} .
\end{equation}
Indeed, if $0 < d \leq 2D,$ then \eqref{e:Theta-claim} follows easily. If $d > 2D,$ then the claim follows by
\begin{align*}
	\Theta &\leq \frac{D - 8d + 2(d - 2D) + 2\sqrt{15D}}{15}  <  \frac{2\sqrt{15D}}{15} < \frac{D + 2\sqrt{4D^2 + 15D}}{15} .
\end{align*}
Since $D\ge\frac13$, then $15D = 45\times \frac{1}{3} D \leq 45 D^2$
and by \eqref{e:Theta-claim} it follows $\Theta < (D + 2\sqrt{4D^2 + 45D^2})/15 = D$.
\end{proof}

\begin{example}\label{ex:hydrogen_gas} We choose $T_{+} = 300 ${\rm K}, $u_+=0$ and $u_- = 1.6 \times 10^4${\rm ms}${}^{-1}$. We compute
$$
  \frac{\Ti}{T_{+}} =  5.260 \times 10^2, \quad
  D = \frac{u_-^2}{a^2T_{+}} = 1.026 \times 10^2. 
$$
Thus, condition \eqref{eq:condition Delta} is
$
      d = 5.260 \times 10^2 \times \alpha_- < 5.2309 \times 10, 
$
showing $0 < \alpha_- < 9.7434 \times 10^{-2}.$ 
By \eqref{eq:Theta2} we deduce, since $d\ge0$,
\begin{equation}\label{e:thetaless}
1 < \frac{\theta_-}{\theta_+} < 1+ \frac{D+2\sqrt{4D^2+15D}}{15} = 3.5708 \times 10.
\end{equation}
Thus 
$
T_- < \theta_- < 9.7434\times 10^2
${\rm K}.
Under the approximation \eqref{eq:approximation alpha_+ = 0}, by Theorem \ref{thm:GN region} we deduce
$\alpha_- < 
2.4650\times 10^{-2}. 
$
Then $d < 1.2966 \times 10$  
and \eqref{eq:Theta2} gives precisely $\theta_-/\theta_+ > 2.7096\times 10$. Thus $\theta_- > 8.1290\times10^3$ and then $T_->
7.9334{\rm K}.
$ 
The exact result can be computed numerically and are $\alpha_- = 0.0109$, $T_{-} = 9559.53\,{\rm K}$, see Figure \ref{fig:Hugoniot+G(a,T)_16000}.  Notice that $\frac{\Ti}{T_-} = 16.5071$ and then
$
   0 < \alpha_- < 1.338 \times 10^{-2}=60(T_-/T_{\rm i})^3.
$
Thus the part of the Hugoniot curve in discussion is located in a genuinely nonlinear region.

\begin{figure}[hbt]
\centering
\includegraphics[width =.60\linewidth]{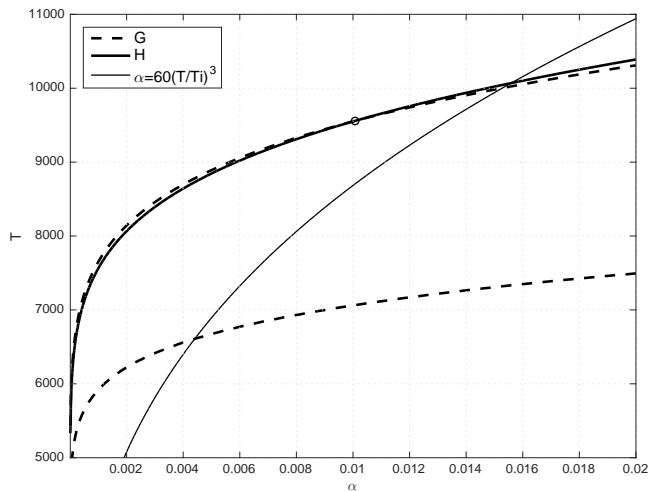}
\caption{The incident shock wave: the Hugoniot locus $H_+(\alpha,T)=0$ and $G_+(\alpha, T) = 0$, see \eqref{e:GH}. Here $(\alpha_+,u_+,T_+)$ are as in case (1), Appendix \ref{sec:appendix_numerical_values}, and $u_- = 1.6\times10^4\,{\rm m s}{}^{-1}$. The intersection point between the two loci, here represented with a small circle, is $\alpha_- = 0.0109$, $T_{-} = 9559.53\,{\rm K}.$ Notice that it lies where genuine nonlinearity holds, see Theorem \ref{thm:genuine nonlinearity}.}\label{fig:Hugoniot+G(a,T)_16000}
\end{figure}

\end{example}

\par
The following result directly follows by \eqref{eq:Theta2}.
\begin{theorem}\label{thm:temperature}
Let $(\alpha_\pm,u_\pm,T_\pm)$ solve Rankine-Hugoniot conditions \eqref{eq:Rankine-Hugoniot-n}. If $\frac1D$ and $\frac{d}{D}$ are small, then
\begin{equation}\label{e:qtheta_-pprox}
    \frac{\theta_-}{\theta_+} \sim \frac{3}{2} + \frac{D}{3}\left(1 - \frac{2d}{D}\right).
\end{equation}
\end{theorem}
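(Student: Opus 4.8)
The plan is to read off the claimed asymptotics directly from the closed-form expression for $\Theta=\theta_-/\theta_+-1$ established in Lemma~\ref{l:Theta>0}, so that the entire argument reduces to a careful expansion of formula \eqref{eq:Theta2} in the two small quantities $1/D$ and $d/D$. First I would rewrite \eqref{eq:Theta2} as $15\,\Theta = D - 8d + 2\sqrt{(2D-d)^2+15D}$ and pull the large parameter out of the radical: since $1/D$ small forces $D$ to be large and $d/D$ small ensures $d<2D$, hence $2D-d>0$, one may write
\[
  \sqrt{(2D-d)^2+15D}=D\,\sqrt{\Bigl(2-\tfrac{d}{D}\Bigr)^{\!2}+\tfrac{15}{D}},
\]
which exhibits the radicand as $u+\varepsilon$ with $u=(2-d/D)^2$ close to $4$ and $\varepsilon=15/D$ small.

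Next I would Taylor-expand $\sqrt{u+\varepsilon}=\sqrt u+\varepsilon/(2\sqrt u)+O(\varepsilon^2)$, multiply by $D$, and simplify using $\sqrt u=2-d/D$ and $\tfrac{15}{2(2-d/D)}=\tfrac{15}{4}+O(d/D)$. This gives $\sqrt{(2D-d)^2+15D}=2D-d+\tfrac{15}{4}+o(1)$, where the $o(1)$ collects the $O(1/D)$ tail of the square-root expansion together with the $O(d/D)$ correction to the constant term. Substituting back yields $2\sqrt{(2D-d)^2+15D}=4D-2d+\tfrac{15}{2}+o(1)$, hence $15\,\Theta=5D-10d+\tfrac{15}{2}+o(1)$, that is $\Theta=\tfrac12+\tfrac{D}{3}-\tfrac{2d}{3}+o(1)$. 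Adding $1$, $\theta_-/\theta_+=1+\Theta\sim\tfrac32+\tfrac{D}{3}\bigl(1-\tfrac{2d}{D}\bigr)$, which is exactly \eqref{e:qtheta_-pprox}.

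There is no real analytic difficulty here; the delicate point is purely the internal consistency of the truncation. The constant $\tfrac32$ appearing in \eqref{e:qtheta_-pprox} --- rather than just $1+\tfrac{D}{3}(1-2d/D)$ --- originates entirely from the first-order term $\varepsilon/(2\sqrt u)$ in the expansion of the radical, so one must retain that $O(1)$ contribution while at the same time discarding the $O(d/D)$ correction to it and the $O(1/D)$ remainder. Keeping the mixed-order bookkeeping consistent is essentially the whole content of the proof; a secondary, purely formal check to carry out at the start is that $d/D$ small indeed forces $d<2D$, so that the radical manipulations above are legitimate.
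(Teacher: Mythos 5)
Your proof is correct and follows exactly the route the paper intends: the paper states that Theorem \ref{thm:temperature} ``directly follows by \eqref{eq:Theta2}'', and your expansion of the radical in \eqref{eq:Theta2} in powers of $1/D$ and $d/D$ is precisely the computation being left implicit, carried out with the right bookkeeping (the $\tfrac{15}{4}$ from the first-order term of the square root producing the constant $\tfrac32$). Nothing is missing.
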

\begin{example}\label{nb:temperature}
For the values of Example \ref{ex:hydrogen_gas}, formula \eqref{e:qtheta_-pprox} shows $ \frac{\theta_-}{\theta_+} > 2.7068
 \times 10$, 
i.e., $T_{-} > 8.1205 {\rm K}.$
\end{example}

\section{Incident and reflected shock waves}\label{sec:condition Lax}

In this section, notation differs slightly from that introduced in Section \ref{sec:temperature}, see Figure \ref{fig:wall2}; this is what is labelled as Regime 5 in \cite{Fukuda-Okasaka-Fujimoto}.
The state behind the incident shock is that ahead of the reflected shock. We denote the speeds of the incident and reflected shock wave by $U_I$ and $-U_R$, respectively. The unperturbed states in front of the incident shock are denoted by $+$, those in the back by $-$, the states in the back of the reflected shock are denoted by $\sharp$. We clearly have $u_{\sharp} = 0$; since $u_+ = u_{\sharp} = 0,$ the reflector plays the same role of a  contact discontinuity.


\begin{figure}[htbp]
\begin{picture}(120,80)(0,-20)
\setlength{\unitlength}{1pt}

\put(120,0){
\put(30,-10){
\put(0,0){\vector(1,0){15}}
\put(15,-3){\makebox(0,0)[t]{$x$}}
\put(0,0){\vector(0,1){15}}
\put(-3,15){\makebox(0,0)[r]{$t$}}
}

\put(120,20){
\put(0,0){\line(0,1){40}}
\put(0,0){\line(0,-1){40}}
\put(0,0){\line(-1,1){40}}
\put(0,0){\line(-1,-1){40}}

\put(-10,-35){\makebox(0,0)[br]{$+$}}
\put(-30,0){\makebox(0,0)[r]{$-$}}
\put(-10,35){\makebox(0,0)[tr]{$\sharp$}}

\put(-40,-38){\makebox(0,0)[br]{$U_I$}}
\put(-39,38){\makebox(0,0)[tr]{$-U_R$}}
}

%
}
\end{picture}
\caption{\label{fig:wall2}{The shock reflection and the corresponding states in the $(x,t)$-plane.}}
\end{figure}
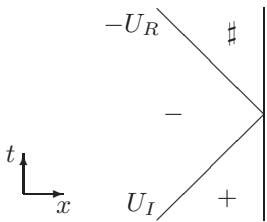
\par 
The main result of this section is that both incident and reflected shock waves satisfy the Lax shock condition under suitable conditions. In a genuine nonlinear region this is a consequence of \cite[Th. 18.2]{Smoller} because of \eqref{eq:euler characteristic speed} and \eqref{e:pS}.

\paragraph{The incident shock wave} 
In this case we have $u_->0$. As in \eqref{e:ThetaDeltaD}, we define 
\begin{equation}\label{e:D_+}
D_{+} = \frac{u_-^2}{a^2 \theta_+}.
\end{equation}

\begin{theorem}\label{thm:shock condition}
Assume $u_+=0$, $\alpha_+\sim0$ and $u_->0$. If $D_{+} > 4$ and
\begin{equation}\label{eq:shock condition+}
     \frac{\theta_-}{\theta_+} < (\sqrt{D_{+}} - 1)^2,
\end{equation} 
then the Lax shock conditions \eqref{eq:shock_conditions} are satisfied.
\end{theorem}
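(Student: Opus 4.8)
The plan is to verify the two inequalities in \eqref{eq:shock_conditions} by expressing everything in terms of the relative shock velocities $V_+ = U_I - u_+ = U_I$ and $V_- = U_I - u_-$, together with the sound speeds $c_\pm$, and then reducing each inequality to a condition on the dimensionless ratios $D_+$ and $\theta_-/\theta_+$. Using the Rankine--Hugoniot relations $\eqref{e:RH-enthalpy}_1$ and $m^2 = -(p_+-p_-)/(v_+-v_-)$, I would first record that $V_\pm^2 = m^2 v_\pm^2$ and $\rho_+ V_+^2 - \rho_- V_-^2 = p_- - p_+$, so that $V_+^2 = \Pi\, v_+(p_- - p_+)/( \cdots )$ can be written using $\Pi = p_-/p_+$ and the kinetic relation \eqref{eq:kinetic}. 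The key algebraic identity is that from \eqref{eq:kinetic}, $(u_- - u_+)^2/(a^2\theta_+) = D_+ = \Pi + v_-/v_+ - 1 - \theta_-/\theta_+ = \Pi + (\theta_-/\theta_+)(1/\Pi) - 1 - \theta_-/\theta_+$, which ties $D_+$ directly to $\Pi$ and $\theta_-/\theta_+$.

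Next I would handle the two Lax inequalities separately. For the upstream condition $V_+ > c_+$ (equivalently $u_+ + c_+ < U_I$): since $\alpha_+\sim 0$ we may use the approximate expressions \eqref{eq:v_+ e_+}, so $c_+^2 = \tfrac53 a^2 T_+ = \tfrac53 a^2\theta_+$ (as $\theta_+ = T_+(1+\alpha_+)\sim T_+$), and $V_+^2 = m^2 v_+^2 = \Pi\, v_+^2 (p_+)/(\cdots)$; more directly $V_+^2 = v_+ \cdot \rho_+ V_+^2 \cdot v_+ = v_+(p_- - p_+) v_+/(v_+ - v_-) \cdot(\cdots)$, which I would rewrite as $V_+^2 = a^2\theta_+ \cdot \Pi \cdot (\Pi-1)/(\Pi - \theta_-/\theta_+)\cdot(\cdots)$ — the point being that it becomes $V_+^2/(a^2\theta_+)$ equal to an explicit rational function of $\Pi$ and $\theta_-/\theta_+$. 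Comparing with $c_+^2/(a^2\theta_+) = 5/3$ should reduce $V_+ > c_+$ to a polynomial inequality; I expect the hypothesis $D_+ > 4$ to be exactly what makes $\Pi$ large enough (recall $\Pi - 1 = D_+ + \theta_-/\theta_+ - (\theta_-/\theta_+)/\Pi \ge D_+ + \theta_-/\theta_+ - \theta_-/\theta_+$, so $\Pi \ge D_+ + 1 \ge 5$) for this to hold. For the downstream condition $0 < V_- < c_-$: here $\alpha_-$ need not be small, so I would use the full formula \eqref{eq:euler characteristic speed} for $c_-$; writing $V_-^2 = m^2 v_-^2 = \Pi^{-1}(p_+-p_-)(v_+-v_-)^{-1}v_-^2\cdot(\cdots)$ one gets $V_-^2/(a^2\theta_-)$ as an explicit function of $\Pi$ and $\theta_-/\theta_+$, to be compared with $c_-^2/(a^2\theta_-) = \tfrac53 \cdot (\text{bracket ratio from }\eqref{eq:euler characteristic speed})$, and I anticipate \eqref{eq:shock condition+} is precisely the translation of $V_- < c_-$. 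The positivity $V_- > 0$ is immediate from $m > 0$ and $v_- > 0$.

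A cleaner route for the $V_- < c_-$ part is to invoke the Bethe--Weyl theorem (Theorem \ref{thm:Bethe-Weyl}) together with Remark \ref{nb:invariance_1}: in a genuinely nonlinear region $p_{vv} > 0$ along the relevant isentrope, and since Theorem \ref{thm:Hugoniot curve alpha T} (with Proposition 5.2/Theorem 5.1 of \cite{Asakura-Corli_ionized}) gives $T_- > T_+$, $\alpha_- > \alpha_+$, hence $S_- > S_+$ along the Hugoniot locus, Theorem \ref{thm:Bethe-Weyl} yields $|U_I - u_-| < c_-$, i.e. $V_- < c_-$, and also $v_- < v_+$. Thus the genuine-nonlinearity framework established in Theorems \ref{thm:genuine nonlinearity} and \ref{thm:GN region} does most of the downstream work, and the genuinely new content of Theorem \ref{thm:shock condition} is the upstream inequality $V_+ > c_+$, for which one must use the explicit low-ionization form of $c_+$ and the hypotheses $D_+ > 4$ and \eqref{eq:shock condition+} quantitatively.

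The main obstacle I anticipate is the bookkeeping in the $V_+ > c_+$ step: one must eliminate $\Pi$ using the quadratic relation it satisfies (either \eqref{eq:p_-/p_+ RH} or the kinetic quadratic $\Pi^2 - B\Pi + \theta_-/\theta_+ = 0$ with $B = 1 + \theta_-/\theta_+ + D_+$) and then check that the resulting inequality in $(D_+, \theta_-/\theta_+)$ holds on the whole region $D_+ > 4$, $\theta_-/\theta_+ < (\sqrt{D_+}-1)^2$. I expect the inequality \eqref{eq:shock condition+} to enter precisely here: note $(\sqrt{D_+}-1)^2 < D_+ - 2\sqrt{D_+} + 1$, and the quadratic for $\Pi$ gives $\Pi = \tfrac12[B + \sqrt{B^2 - 4\theta_-/\theta_+}]$, which under \eqref{eq:shock condition+} satisfies a clean lower bound; combined with $V_+^2 = a^2\theta_+\,\Pi^2/(\text{something} < \tfrac35\Pi^2 \cdot \cdots)$ this should close the argument. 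The care needed is to ensure all the rational-function manipulations are sign-correct (denominators such as $v_+ - v_- > 0$, $\Pi - \theta_-/\theta_+ > 0$), which follows from $v_- < v_+$ and $\Pi > \theta_-/\theta_+$ — the latter being equivalent to $V_+^2 > 0$ and the former from Bethe--Weyl as above.
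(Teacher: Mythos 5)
Your computational first route is broadly the paper's strategy (the paper eliminates $\Pi$ by combining \eqref{eq:thermodynamic RHV_pm} with $V_+=U_I$, $V_-=U_I-u_-$ to get a quadratic for $U_I$, then bounds $U_I$ from above and below in terms of $D_+$ and $\theta_-/\theta_+$), but your proposal has two genuine problems. First, you have the roles of the hypotheses reversed. The upstream inequality $c_+<U_I$ is the \emph{cheap} one: from $U_I>\frac{u_-}{2}\bigl(1+\sqrt{1+4/D_+}\bigr)$ it holds already for $D_+\gtrsim 0.27$, with no use of \eqref{eq:shock condition+}. The entire quantitative content of the hypotheses $D_+>4$ and \eqref{eq:shock condition+} is consumed by the \emph{downstream} inequality $U_I<u_-+c_-$: after bounding $U_I<u_-\bigl[1+\frac{1}{D_+}\bigl(\frac{\theta_-}{\theta_+}-1\bigr)+\frac{1}{\sqrt{D_+}}\bigr]$ and using that the bracket ratio in \eqref{eq:euler characteristic speed} lies in $(\frac35,1]$ so that $c_-\ge a\sqrt{\theta_-}$, the condition reduces to $X^2-\sqrt{D_+}\,X+\sqrt{D_+}-1<0$ with $X=\sqrt{\theta_-/\theta_+}$, whose solution set is $1<X<\sqrt{D_+}-1$; this is nonempty exactly when $D_+>4$ and is exactly \eqref{eq:shock condition+}. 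Your plan to "close the argument" on the upstream side using \eqref{eq:shock condition+} quantitatively chases the wrong inequality.

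Second, your preferred "cleaner route" for $V_-<c_-$ via Bethe--Weyl does not work under the stated hypotheses. Theorem \ref{thm:Bethe-Weyl} requires $p_{vv}>0$ along the relevant isentrope, i.e.\ genuine nonlinearity, which is \emph{not} assumed in Theorem \ref{thm:shock condition}; the paper stresses (abstract, Section \ref{sec:condition Lax} and Section \ref{sec:conclusions}) that the point of this theorem is precisely that the Lax conditions are verified even outside genuinely nonlinear regions, the genuinely nonlinear case being already covered by \cite[Th.~18.2]{Smoller}. Moreover, your step "$T_->T_+$, $\alpha_->\alpha_+$, hence $S_->S_+$" is unjustified in the $(\alpha,T)$ coordinates used here: from \eqref{eq:p-alphaT} one has $\eta_T=-\frac{T_{\rm i}}{T^2}(1+\alpha)<0$ while $\eta_\alpha>0$, so monotonicity of $S$ along the Hugoniot locus cannot be read off from the signs of the increments of $\alpha$ and $T$ separately. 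To repair the proof you should drop the Bethe--Weyl shortcut and carry out the downstream estimate directly, which is where $D_+>4$ and \eqref{eq:shock condition+} actually enter.
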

\begin{proof}
For brevity, below we simply denote $D$ for $D_{+}$. By \eqref{eq:shock_conditions} we must prove that 
\begin{equation}\label{eq:shock_conditions2}
     u_-< U_I < u_- + c_-, \quad  c_{+} < U_I.
\end{equation} 
\par
By \eqref{eq:EOS}, \eqref{eq:e-n} and $\eqref{e:RH-enthalpy}_2$ it follows $\rho_+\left(a^2\theta_+ + V_+^2\right) = \rho_-\left(a^2\theta_- + V_-^2\right)$. If $V_-=0$, then $V_+=0$ and everything is trivial. If $V_-\neq 0,$ then $\eqref{e:RH-enthalpy}_1$ yields
\begin{equation}\label{eq:thermodynamic RHV_pm}
   \frac{a^2\theta_+ + V_+^2}{a^2\theta_- + V_-^2} = \frac{\rho_-}{\rho_+} = \frac{V_+}{V_-}.
\end{equation}
We have $V_{+}= U_I$, $V_-= U_I - u_-$,
and by \eqref{eq:thermodynamic RHV_pm} we obtain
\begin{equation}\label{e:xyu}
 \frac{a^2\theta_- + (U_I - u_-)^2}{a^2\theta_+ + U_I^2}   = \frac{U_I - u_-}{U_I}.
\end{equation}
\par
Notice that by \eqref{e:xyu} it follows $U_I > u_-$ and then the first condition in \eqref{eq:shock_conditions2} is satisfied. 
This formula yields a quadratic equation for $U_I$, namely,
$$
  U_I^2 -\left[u_- + \frac{a^2}{u_- }(\theta_- - \theta_+)\right]U_I - a^2\theta_+= 0.
$$
Since $u_- >0$, we deduce that $U_I>0$ and then
\begin{eqnarray}\label{eq:U_I}
  U_I 
  &=&  \frac{u_-}{2}\left[{\textstyle1 + \frac{1}{D}\left(\frac{\theta_-}{\theta_+} - 1\right) + \sqrt{\left[ 1 + \frac{1}{D}\left(\frac{\theta_-}{\theta_+} - 1\right)\right]^{\! 2} + {\textstyle \frac{4}{D}}}}\right].\label{e:69}
\end{eqnarray}
Thus, the two last shock conditions in \eqref{eq:shock_conditions2} 
to be proved are (here we set $\alpha_+=0$)
\begin{equation}\label{eq:shock condition}
 a\sqrt{\frac{5\theta_+}{3}} <
  U_I  < u_- + a\sqrt{\frac{5\theta_-}{3}}\sqrt{ 1 - \frac{\frac{2\alpha_-(1 - \alpha_-)\Ti^2}{15T_-^2}}{1 + \alpha_-(1 - \alpha_-)\left(\frac{5}{4} + \frac{\Ti}{T_-} + \frac{\Ti^2}{3T_-^2}\right)}}.
\end{equation} 
\par
Since $\frac{\theta_-}{\theta_+} - 1>0$ by Lemma \ref{l:Theta>0}, by \eqref{e:69} we deduce
$$
 U_I > \frac{u_-}{2}\left(1 + \sqrt{1 + \frac{4}{D}}\right),
$$
and then a sufficient condition for the left inequality in \eqref{eq:shock condition} to hold is
$$
 a\sqrt{\frac{5\theta_+}{3}} <  \frac{u_-}{2}\left( 1 + \sqrt{1 + \frac{4}{D}}\right).
$$
By \eqref{e:D_+} this inequality is equivalent to 
$$
 \sqrt{\frac{5}{3}} <  \frac{\sqrt{D}}{2}\left( 1 + \sqrt{1 + \frac{4}{D}}\right)
\frac{\theta_-}{\theta_+}
$$
which is satisfied for $D > 0.266.$ Now, since 
$$
\sqrt{\left[1 + \frac{1}{D}\left(\frac{\theta_-}{\theta_+} - 1\right)\right]^2 + \frac{4}{D}} 
\leq 
\left[1 + \frac{1}{D}\left(\frac{\theta_-}{\theta_+} - 1\right)\right] + \frac{2}{\sqrt{D}},
$$
we deduce by \eqref{e:69} that 
$$
U_I < u_-\left[1 + \frac{1}{D}\left(\frac{\theta_-}{\theta_+} - 1\right) + \frac{1}{\sqrt{D}}\right].
$$
A sufficient condition for the inequality on the right in \eqref{eq:shock condition} to hold is
$$
   u_- \left[\frac{1}{D}\left(\frac{\theta_-}{\theta_+} - 1\right) + \frac{1}{\sqrt{D}}\right]
< a\sqrt{\frac{5\theta_-}{3}}\sqrt{\frac{1 + \alpha_-(1 - \alpha_-)\left(\frac{5}{4} + \frac{\Ti}{T_-} + \frac{\Ti^2}{5T_-^2}\right)}{1 + \alpha_-(1 - \alpha_-)\left(\frac{5}{4} + \frac{\Ti}{T_-} + \frac{\Ti^2}{3T_-^2}\right)}},
$$
that is,
\begin{eqnarray}
  \frac{1}{D}\left(\frac{\theta_-}{\theta_+} - 1\right) + \frac{1}{\sqrt{D}}
&<& 
\frac{1}{\sqrt{D}}\sqrt{\frac{5\theta_-}{3\theta_+}}\sqrt{\frac{1 + \alpha_-(1 - \alpha_-)\left(\frac{5}{4} + \frac{\Ti}{T_-} + \frac{\Ti^2}{5T_-^2}\right)}{1 + \alpha_-(1 - \alpha_-)\left(\frac{5}{4} + \frac{\Ti}{T_-} + \frac{\Ti^2}{3T_-^2}\right)}}.
\label{e:simple_condition}
\end{eqnarray}
Note that the argument of the last square root in \eqref{e:simple_condition}
is a decreasing function with respect to $\frac{\Ti}{T_-}$; hence,
$$
   \frac{3}{5} <  \frac{1 + \alpha_-(1 - \alpha_-)\left(\frac{5}{4} + \frac{\Ti}{T_-} + \frac{\Ti^2}{5T_-^2}\right)}{1 + \alpha_-(1 - \alpha_-)\left(\frac{5}{4} + \frac{\Ti}{T_-} + \frac{\Ti^2}{3T_-^2}\right)} \leq 1.
$$ 
Thus a simple sufficient condition in order that \eqref{e:simple_condition} holds is
$$
   \left[\frac{1}{D}\left(\frac{\theta_-}{\theta_+} - 1\right) + \frac{1}{\sqrt{D}}\right]
< \frac{1}{\sqrt{D}}\sqrt{\frac{\theta_-}{\theta_+}}.
$$
We denote $X = \sqrt{\theta_-/\theta_+}$; then $X\ge1$ by Lemma \ref{l:Theta>0} and the last inequality above is equivalent to 
$
  X^2 - \sqrt{D}X + \sqrt{D} -1 < 0.
$
The solutions of this inequality are $1 < X <  \sqrt{D} -1$ and they exists because $D>4$. This proves the theorem.
\end{proof}
\begin{nb}\label{nb:shock condition}
In the case $u_- = 1.6 \times 10^4${\rm m s}${}^{-1}$ and $T_{+} = 300 ${\rm K}, estimate \eqref{e:thetaless} gives $\frac{\theta_-}{\theta_+}<3.5708 \times 10$. We find $(\sqrt{D_+} - 1)^2 = 8.337 \times 10,$
which shows that the shock conditions are satisfied.
\end{nb}

\paragraph{The reflected shock wave} 
We shall apply Theorem \ref{thm:Hugoniot curve alpha T}  to the reflected wave by setting $(\alpha_0, u_0, T_0) = (\alpha_-, u_-, T_-)$ and $(\alpha, u, T)  = (\alpha_{\sharp}, u_{\sharp},T_{\sharp}).$ Then for given $\alpha_-, u_-, T_-$ (obtained in the previous section) and $u_{\sharp} = 0,$ we find that there is a unique set of the solution $(\alpha_{\sharp}, T_{\sharp}).$ We denote
\begin{equation}\label{e:DA}
D_- = \frac{u_-^2}{a^2 \theta_-}.
\end{equation}

\begin{figure}[hbt]
\centering
 \includegraphics[width =.60\linewidth]{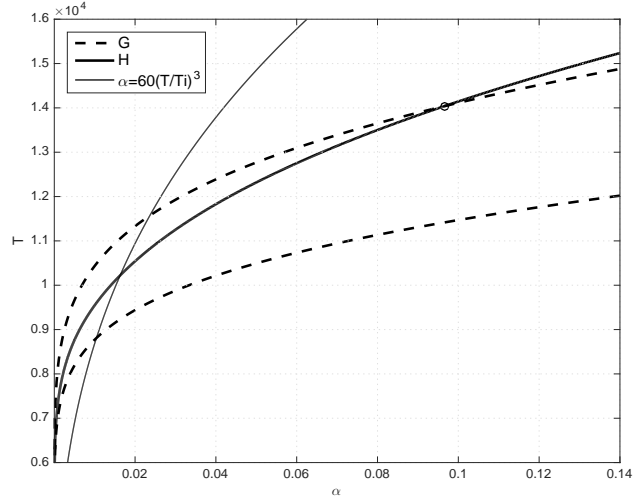}
 \caption{The reflected shock wave: the Hugoniot locus $H_+(\alpha,T)=0$ and $G_+(\alpha, T) = 0$. Here 
 $\alpha_- = 0.0101$, $u_+=0 $, $T_{-} = 9559.53$ and $u_- = 1.6 \times 10^4${\rm m s}${}^{-1}$. The intersection point is
 $\alpha_\sharp = 0.0965$, $T_\sharp = 14042.0$, where genuine nonlinearity holds.}
 \label{fig:Hugoniot+G(a,T)_16000_reflection}
\end{figure}

\begin{theorem}\label{thm:shock condition_reflection}
If $D_-\ge\frac13$, then the Lax shock conditions hold for the reflected shock wave.
\end{theorem}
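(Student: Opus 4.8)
The plan is to run, for the reflected front, the same argument used for the incident front in the proof of Theorem~\ref{thm:shock condition}, but with the roles of the left and right states interchanged (as pointed out at the beginning of Section~\ref{sec:temperature}), so that the reflected shock is a \emph{backward} shock and the Lax conditions to be checked are \eqref{eq:shock_conditions1} with $(\alpha_+,u_+,T_+)\to(\alpha_\sharp,0,T_\sharp)$ and $(\alpha_-,u_-,T_-)$ playing the role of the state ahead of the reflected front. First I would apply Theorem~\ref{thm:Hugoniot curve alpha T} with $(\alpha_0,u_0,T_0)=(\alpha_-,u_-,T_-)$ and $u=u_\sharp=0$: this yields a unique admissible thermodynamic state $(\alpha_\sharp,T_\sharp)$ with $\alpha_\sharp\in(\alpha_-,1)$, and, since $T=T(\alpha)$ is strictly increasing along the thermodynamic part of the Hugoniot locus, also $T_\sharp>T_-$; hence $\theta_\sharp>\theta_-$, i.e.\ $Y:=\theta_\sharp/\theta_->1$.

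Next I would convert the jump relations $\eqref{e:RH-enthalpy}_{1,2}$, together with the equation of state \eqref{eq:EOS} and the expression \eqref{eq:e-n} of $e$, into an equation for the reflected shock speed, exactly as \eqref{eq:thermodynamic RHV_pm}--\eqref{e:69} were obtained for $U_I$. Denoting by $-U_R$ the speed of the reflected front, the relative velocities are $V_\sharp=-U_R$ and $V_-=-U_R-u_-$, so the analogue of \eqref{eq:thermodynamic RHV_pm} is
\[
\frac{a^2\theta_\sharp+U_R^2}{a^2\theta_-+(U_R+u_-)^2}=\frac{\rho_-}{\rho_\sharp}=\frac{U_R}{U_R+u_-},
\]
which produces the quadratic $U_R^2+\bigl[u_--\tfrac{a^2}{u_-}(\theta_\sharp-\theta_-)\bigr]U_R-a^2\theta_\sharp=0$. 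Its constant term $-a^2\theta_\sharp$ being negative, there is a single positive root, and using $a^2\theta_-=u_-^2/D_-$ it can be written
\[
U_R=\frac{u_-}{2}\left[\frac{Y-1}{D_-}-1+\sqrt{\Bigl(1-\frac{Y-1}{D_-}\Bigr)^{\!2}+\frac{4Y}{D_-}}\;\right]>0 .
\]
In particular the shock speed $-U_R$ is negative, which is the condition $U<u_\sharp$ in \eqref{eq:shock_conditions1} and, incidentally, rules out a backward wave of positive speed, see Remark~\ref{nb:positive backward}.

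There remain the two ``sonic'' inequalities in \eqref{eq:shock_conditions1}: $U_R<c_\sharp$ (subsonic behind) and $c_-<u_-+U_R$ (supersonic ahead). The crucial input is the bound $Y<1+D_-$, obtained from Corollary~\ref{c:ThetaD} applied to the reflected wave (with the left/right interchange, so that its ``$\Theta$'' equals $Y-1$ and its ``$D$'' equals $D_-$) together with the hypothesis $D_-\ge\tfrac13$; thus $t:=(Y-1)/D_-<1$. From \eqref{eq:euler characteristic speed} one reads $a\sqrt{\theta}<c<a\sqrt{5\theta/3}$ (the fraction under the square root lies strictly between $\tfrac35$ and $1$), so it suffices to prove $U_R<a\sqrt{\theta_\sharp}$ and $u_-+U_R\ge a\sqrt{5\theta_-/3}$. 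Both reduce, after squaring, to elementary one-variable inequalities: using $a\sqrt{\theta_-}=u_-/\sqrt{D_-}$ and $a\sqrt{\theta_\sharp}=u_-\sqrt{Y/D_-}$, the first is equivalent to $t<1$, already available; the second is immediate when $D_-\ge\tfrac53$ (then $a\sqrt{5\theta_-/3}\le u_-<u_-+U_R$), while for $\tfrac13\le D_-<\tfrac53$ it collapses, after substituting $t=(Y-1)/D_-$ and cancelling, to $(1+t)\sqrt{D_-}\ge\tfrac{2\sqrt{15}}{15}$, which holds since $t>0$ and $D_-\ge\tfrac13>\tfrac4{15}$. Collecting the three inequalities gives \eqref{eq:shock_conditions1} for the reflected wave.

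The step I expect to be the main obstacle is the estimate $Y=\theta_\sharp/\theta_-<1+D_-$: the rest is a transcription of the incident-shock computation, but this bound is exactly what forces the hypothesis $D_-\ge\tfrac13$, and it must be extracted from Corollary~\ref{c:ThetaD} with the correct forward-to-backward relabelling of $\Theta$, $d$ and $D$. A lesser but genuine point of care is the bookkeeping of signs, namely that the physically relevant root of the quadratic is the positive one (a leftward-moving reflected front) and that the right-hand sides are nonnegative before squaring in the two sonic inequalities.
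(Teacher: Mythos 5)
Your proposal is correct and follows essentially the same route as the paper: the same quadratic and explicit formula for $U_R$, the same use of Corollary~\ref{c:ThetaD} (after the forward/backward relabelling) to get $t=(\theta_\sharp/\theta_--1)/D_-<1$, and the same bounds $a\sqrt{\theta}<c\le a\sqrt{5\theta/3}$ reducing the two sonic conditions to $U_R< a\sqrt{\theta_\sharp}$ and $u_-+U_R\ge a\sqrt{5\theta_-/3}$. The only substantive difference is your treatment of the latter inequality, where the identity $(1-t)^2+4Y/D_-=(1+t)^2+4/D_-$ lets you reduce it \emph{exactly} to $(1+t)\sqrt{D_-}\ge 2/\sqrt{15}$; this is a cleaner and tighter argument than the chain of estimates used in the paper.
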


\begin{proof}
As in the proof of Theorem \ref{thm:shock condition}, we simply denote $D$ for $D_-$. By \eqref{eq:shock_conditions1} we must prove that 
\begin{equation}\label{eq:shock_conditions-UR}
     c_--u_-< U_R < c_\sharp, \quad  U_R>0.
\end{equation}
We have $	V_- =- U_R - u_-$, $V_\sharp = - U_R$; 
%
by \eqref{eq:thermodynamic RHV_pm} we deduce
\begin{equation}\label{e:UR>0}
 \frac{a^2\theta_\sharp + U_R^2}{a^2\theta_- + (U_R + u_-)^2}   = \frac{U_R}{U_R + u_-}
\end{equation}
and thus we obtain a quadratic equation for $U_R$:
$$
  U_R^2 -\left[- u_- + \frac{a^2}{u_- }(\theta_\sharp - \theta_-)\right]U_R - a^2\theta_\sharp= 0.
$$
Since $U_R > 0$ by \eqref{e:UR>0}, the third condition in \eqref{eq:shock_conditions-UR} is satisfied.  By \eqref{e:DA} we obtain
\begin{eqnarray}
U_R &=&
\frac{u_-}{2}\left[{\textstyle - 1 + \frac{1}{D}\left(\frac{\theta_\sharp}{\theta_-} - 1\right) + \sqrt{\left[- 1 + \frac{1}{D}\left(\frac{\theta_\sharp}{\theta_-} - 1\right)\right]^{\! 2} + {\textstyle \frac{4}{D}\frac{\theta_\sharp}{\theta_-}}}}\right].
\label{eq:U_R}
\end{eqnarray}
By \eqref{eq:euler characteristic speed}, the shock condition in \eqref{eq:shock_conditions-UR} to be proved is
\begin{equation}\label{eq:shock condition_R}
-u_- + a\sqrt{\frac{5\theta_-}{3}}\sqrt{\frac{1 + \alpha_-(1 - \alpha_-)\left(\frac{5}{4} + \frac{\Ti}{T_-} + \frac{\Ti^2}{5T_-^2}\right)}{1 + \alpha_-(1 - \alpha_-)\left(\frac{5}{4} + \frac{\Ti}{T_-} + \frac{\Ti^2}{3T_-^2}\right)}}  
<
U_R 
<
a\sqrt{\frac{5\theta_\sharp}{3}}\sqrt{\frac{1 + \alpha_\sharp(1 - \alpha_\sharp)\left(\frac{5}{4} + \frac{\Ti}{T_\sharp} + \frac{\Ti^2}{5T_\sharp^2}\right)}{1 + \alpha_\sharp(1 - \alpha_\sharp)\left(\frac{5}{4} + \frac{\Ti}{T_\sharp} + \frac{\Ti^2}{3T_\sharp^2}\right)}}.
\end{equation}
Since we are assuming $D_-\ge\frac13$ then by Corollary \ref{c:ThetaD} and \eqref{eq:U_R} we have
$$
  U_R \leq \frac{u_-}{2}\left\{{\textstyle - 1 + \frac{1}{D}\left(\frac{\theta_\sharp}{\theta_-} - 1\right) + \left[1 - \frac{1}{D}\left(\frac{\theta_\sharp}{\theta_-} - 1\right)\right] + {\textstyle \frac{2}{\sqrt{D}}\sqrt{\frac{\theta_\sharp}{\theta_-}}}}\right\} = \frac{u_-}{\sqrt{D}}\sqrt{\frac{\theta_\sharp}{\theta_-}}.
$$
Then, a sufficient condition for the left inequality in \eqref{eq:shock condition_R} is
$$
    \frac{u_-}{\sqrt{D}}\sqrt{\frac{\theta_\sharp}{\theta_-}}  < a\sqrt{\frac{5\theta_\sharp}{3}}\sqrt{\frac{1 + \alpha_\sharp(1 - \alpha_\sharp)\left(\frac{5}{4} + \frac{\Ti}{T_\sharp} + \frac{\Ti^2}{5T_\sharp^2}\right)}{1 + \alpha_\sharp(1 - \alpha_\sharp)\left(\frac{5}{4} + \frac{\Ti}{T_\sharp} + \frac{\Ti^2}{3T_\sharp^2}\right)}}
$$
and then, by recalling \eqref{e:DA}, a simple sufficient condition for the left inequality in \eqref{eq:shock condition_R} is
$$
\frac{3}{5}\left[1 + \alpha_\sharp(1 - \alpha_\sharp)\left(\frac{5}{4} + \frac{\Ti}{T_\sharp} + \frac{\Ti^2}{3T_\sharp^2}\right)\right] <  1 + \alpha_\sharp(1 - \alpha_\sharp)\left(\frac{5}{4} + \frac{\Ti}{T_\sharp} + \frac{\Ti^2}{5T_\sharp^2}\right),
$$
which is obviously true. About the inequality on the right in \eqref{eq:shock condition_R}, 
 notice that by \eqref{eq:Theta2} we have $\frac{\theta_\sharp}{\theta_-} \geq 1$; by \eqref{eq:U_R} we deduce
\begin{eqnarray*}
 U_R + u_- &=& \frac{1}{2}\left[{\textstyle u_- + \frac{a^2\theta_-}{u_-}\left(\frac{\theta_\sharp}{\theta_-} - 1\right) + u_-\sqrt{\left[- 1 + \frac{1}{D}\left(\frac{\theta_\sharp}{\theta_-} - 1\right)\right]^{\! 2} + {\textstyle \frac{4}{D}\frac{\theta_\sharp}{\theta_-}}}}\right]\\
 &\geq& {\textstyle a \sqrt{\theta_-}\sqrt{\frac{\theta_\sharp}{\theta_-} - 1}  + \frac{u_-}{2}\sqrt{\left[- 1 + \frac{1}{D}\left(\frac{\theta_\sharp}{\theta_-} - 1\right)\right]^{\! 2} + {\textstyle \frac{4}{D}\frac{\theta_\sharp}{\theta_-}}}}
\end{eqnarray*}
and a simple sufficient condition for condition \eqref{eq:shock condition_R} to hold is
$$
   {\textstyle a \sqrt{\theta_-}\sqrt{\frac{\theta_\sharp}{\theta_-} - 1}  + \frac{u_-}{2}\sqrt{\left[- 1 + \frac{1}{D}\left(\frac{\theta_\sharp}{\theta_-} - 1\right)\right]^{\! 2} + {\textstyle \frac{4}{D}\frac{\theta_\sharp}{\theta_-}}}} 
> a\sqrt{\frac{5\theta_-}{3}},
$$
that is,
$$
    {\textstyle \sqrt{\frac{\theta_\sharp}{\theta_-} - 1}  + \sqrt{D\left[- 1 + \frac{1}{D}\left(\frac{\theta_\sharp}{\theta_-} - 1\right)\right]^{\! 2} + {\textstyle \frac{4\theta_\sharp}{\theta_-}}}} 
> \sqrt{\frac{5}{3}}.
$$
The left side of the above inequality is larger than 2. Thus also the inequality on the right is satisfied.
\end{proof}

\begin{example}
In the case $\alpha_- = 0.0101$, $T_{-} = 9559.53$ and $u_- = 1.6 \times 10^4${\rm m s}${}^{-1}$, we compute $D_-= 3.1862$. So the assumption of Theorem \ref{thm:shock condition_reflection} is largely satisfied.
\end{example}
\begin{nb}
The degree of ionization behind the reflected shock wave is  $\alpha_\sharp = 0.0965$, which is much higher than $\alpha_- = 0.0101;$ on the other hand, we have that $T_\sharp<\frac32 T_-.$ These different growths are due to the concavity of the Hugoniot locus, see Corollary \ref{cor:concave T}. As the above example indicates, the Hugoniot locus of $(\alpha_-, T_-)$ is a low-gradient curve compared with that of the incident shock wave. Thus we conclude that the reflector set at the end of the stock tube has a notable effect on the increasing of the degree of ionization.
\end{nb}
\section{Conclusions and discussions}\label{sec:conclusions}
In this paper we examined some mathematical properties of both incident and reflected shock waves in an electromagnetic shock tube with a reflector, where the ionization of the gas is taken into account. 
\par
In an actual shock tube problem, the degree of ionization $\alpha_+$ of the initial state is almost zero. By assuming that $\alpha_+$ and  $\exp(-\Ti/T_+)$ are comparable, which is confirmed by experimental data, we obtained an approximate form of both the Rankine-Hugoniot conditions and the Hugoniot locus issuing from $(0, T_-)$ in the $(\alpha, T)$-plane. We showed that, under a reasonable limitation that is in accord with experimental data, the approximate Hugoniot locus is located in a genuinely nonlinear region as is the case for both shock waves.
\par
We identified the dimensionless parameters $D=\frac{ (u_- - u_+)^2}{a^2\theta_+}$ and $d=\frac{\Ti}{\theta_+}(\alpha_- - \alpha_+)$; in particular $D$ is computed only by the initial quantities. Then we computed and estimated the ratio $\frac{T_-(1 + \alpha_-)}{T_+(1 + \alpha_+)}$ for the incident shock wave. We showed that if the degree of ionization is sufficiently small through the whole process and $d$ is small, then the ratio $\frac{T_-}{T_+}$ is estimated only by $D.$ A computation based on the data used in \cite{Fukuda-Okasaka-Fujimoto} indicates that, in the state behind the incident shock front, the temperature increases remarkably but the degree of ionization does not undergo a similar growth.
\par
The reflected shock wave is constructed by exploiting an existence theorem proved in \cite{Asakura-Corli_ionized}. Under some reasonable limitations, which fully agree with experimental data, we prove that both shock waves satisfy the Lax conditions. We emphasize that this is valid even outside genuinely nonlinear regions.
\par
Moreover, we show that in the state behind the reflected shock front, the degree of ionization increases remarkably. This phenomenon and the analogous previous one are mathematically translated by showing that the Hugoniot loci are locally concave. It is proved that the Hugoniot locus issuing from a very small $\alpha_0$ and $T_0$ is a very high-gradient curve for small $\alpha$; on the contrary, the curve tends to a low-gradient curve as $\alpha$ becomes bigger. This proves that the reflector strongly increases the degree of ionization.

\appendix
\section{Some numerical values}\label{sec:appendix_numerical_values}
\small We collect some physical values; they are referred to the case of a hydrogen gas.  Specific gas constant:  $a^2=8314 {\rm\, J\, kg}{}^{-1} {\rm K}{}^{-1}$; constant in Saha's equation $\kappa = 29.9774 {\rm\, K^\frac52\ m\ kg}{}^{-1}\ {\rm s}{}^{-2}$; ionization temperature $\Ti = 1.5780 \times 10^5{\rm K}$. The following values are used in the low-pressure chamber in \cite{Fukuda-Okasaka-Fujimoto}, for $\alpha$ computed through \eqref{eq:deg-ionization-n}: $u_+=0$, $p=1466.3 {\rm Pa}\ (11 {\rm Torr})$, $u_- = 8.1 \times 10^3$m s${}^{-1}$. We also consider the speed $u_- = 1.6 \times 10^4$m s${}^{-1}$. Other reference values are as follows:
%
\begin{center}
\begin{tabular}{c | c c c c c c c}
& $T$& $\alpha$ & $\frac{T}{T_{\rm i}}$ & $60(\frac{T}{T_{\rm i}})^3$ & $e^{-\frac{T_{\rm i}}{2T}}$ & $(\frac{T}{T_{\rm i}})^\frac54 e^{-\frac{T_{\rm i}}{2T}}$ &$\sqrt{\frac{T_{+}T_{\rm i}^{\frac{3}{2}}}{\kappa p_{+} }}$ 
\\[2mm]
\hline
(1) & $300{\rm K}$ & $3.5929 \times 10^{-114}$ & $0.0019$ &$4.1228\times 10^{-7}$ & $6.0332 \times 10^{-115}$ & $2.3950 \times 10^{-118}$&$6.5408\times 10^2$
\\[2mm]
(2) & $750{\rm K}$ & $3.8418 \times 10^{-45}$ & $0.0047$ &$6.4419\times 10^{-6}$ & $2.0522 \times 10^{-46}$ & $2.5610 \times 10^{-49}$&$1.0342\times 10^3$
\\
\hline
\end{tabular}
\end{center}


\section*{Acknowledgements}
The second author is member of the Italian GNAMPA-INDAM and acknowledges support from this institution. He was also supported by the PRIN Project {\em Nonlinear partial differential equations of hyperbolic or dispersive type and transport equations.}



\end{document}